\documentclass{amsart}

\usepackage{amsmath}
\usepackage{amsfonts,txfonts}
\usepackage{amsthm}
\usepackage{graphicx} 
\usepackage{amssymb,mathrsfs}
\usepackage{url}
\usepackage{color}
\usepackage{xcolor} 

\newtheorem{theorem}{Theorem}[section]
\newtheorem{lemma}[theorem]{Lemma}
\newtheorem{corollary}[theorem]{Corollary}
\newtheorem{proposition}[theorem]{Proposition}

\theoremstyle{definition}
\newtheorem{definition}[theorem]{Definition}
\newtheorem{algorithm}[theorem]{Algorithm}

\theoremstyle{remark}
\newtheorem{remark}[theorem]{Remark}

\numberwithin{equation}{section}

\newcommand{\bN}{\mathbb{bN}}
\newcommand{\bR}{\mathbb{R}}
\newcommand{\bQ}{\mathbb{Q}}
\newcommand{\bC}{\mathbb{C}}

\newcommand{\sC}{\mathscr{C}}

\newcommand{\NN}{\mathbb{N}}

\newcommand{\CC}{\mathbb{C}}

\newcommand{\Ct}{\mathbb{C}^2}

\newcommand{\ep}{\varepsilon}

\newcommand{\cN}{\mathcal{N}}

\newcommand{\cU}{\mathcal{U}}

\newcommand{\cA}{\mathcal{A}}
\newcommand{\cW}{\mathcal{W}}

\newcommand{\cB}{\mathcal{B}}

\newcommand{\cBsurvk}{\mathcal{B}_k^{\mathrm{surv}}}
\newcommand{\Wslocalpha}{W^s_{\mathrm{loc}}(\alpha)}

\renewcommand{\Re}{\text{Re}}
\renewcommand{\Im}{\text{Im}}

\newcommand{\Henon}{H\'{e}non }

\newcommand{\HNNP}{\mathcal{H}_{\mathcal{C}}}

\begin{document}

\title[Attracting and neutral dynamics and computability of Julia sets for H\'enon maps]
{Computability of Julia sets for complex H\'enon maps: The role of attracting and neutral cycles}
 
\author[S. Boyd]{Suzanne Boyd}
\address{Department of Mathematical Sciences\\
University of Wisconsin Milwaukee\\
PO Box 413\\
Milwaukee, WI 53201, 
USA}
\email{sboyd@uwm.edu, ORCID: 0000-0002-9480-4848}

\author[C. Wolf]{Christian Wolf}
\address{Department of Mathematics and Statistics\\
Mississippi State University\\
Starkville, MS 39759, USA}
\email{cwolf@math.msstate.edu, ORCID: 0000-0002-7976-3574.}
\thanks{C.W.\ was partially supported by a grant from the Simons Foundation (SFI-MPS-TSM-00013897). A significant portion of the paper was developed during a visit of S.B.\  to the Department of Mathematics and Statistics of Mississippi State University. We thank the department for their hospitality and their support of the visit.}

\subjclass[2020]{Primary: 37F10; Secondary: 37D20, 03D78, 03D80, 32H50.  }
\date{\today}
\keywords{H\'enon maps, Julia sets, computable analysis, stable manifolds, neutral dynamics, non-computability.}

\begin{abstract}
We study the computability of Julia sets for polynomial diffeomorphisms of $\mathbb{C}^2$ with dynamical degree $d>1$, whose prototypical examples are complex \Henon maps. In previous work, we established computability under the assumption of hyperbolicity (Axiom~A). Here, we extend this result to maps whose Fatou components are attracting basins, allowing for the possibility of no attracting basins or infinitely many basins. This yields computability of the Julia set for several classes of non-hyperbolic maps, including certain substantially dissipative maps in the Lyubich-Peters class, and certain quasi-hyperbolic maps.
Our proof is based on an algorithm that separates the dynamics into escaping, attracting, and saddle regimes. A key ingredient is the use of stable manifolds of saddle periodic points to approximate the forward Julia set via backward iteration. We first prove the result for generalized \Henon mappings and then extend it to arbitrary polynomial diffeomorphisms by expressing them as finite compositions of generalized \Henon maps. 
Finally, we present examples of non-computability in the presence of neutral dynamics, including \Henon maps with computable coefficients exhibiting semi-Siegel behavior. These examples show that the computability/non-computability dichotomy associated with neutral dynamics in one-dimensional complex dynamics in part persists in higher dimensions.

\end{abstract}

\maketitle

\section{Introduction}
\label{sec:Intro}

The computability of dynamically defined sets, and in particular Julia sets, has been extensively studied in one-dimensional complex dynamics over the past couple of decades, see, e.g., (\cite{BBRY-2011,Braverman2005,Bravermanparabolic, BY2009,BSW2020,CRY2018}).  It is known that Julia sets of polynomials in the complex plane are computable provided they have no Siegel disks; as a complementary result, Braverman and Yampolsky show that there exist Siegel disk polynomials with computable coefficients whose Julia sets are not computable (\cite{BravermanYampolsky2006, BY2009}). Moreover, although hyperbolic and parabolic polynomials have polynomial-time computable Julia sets \cite{Braverman2005, Bravermanparabolic}, in the parabolic case the algorithm requires the input of some basic combinatorial information about the parabolic point.  
Thus, in one complex dimension, computability is closely tied to the underlying dynamical structure of neutral periodic orbits.

In a sequence of recent works \cite{BoydWolf-Skew1,BoydWolf-1Dim,BoydWolf-Henon}, we initiated the study of computability questions in higher-dimensional complex dynamics. In particular,  under hyperbolicity (or Axiom~A) assumptions, we established computability of Julia sets for polynomial skew products and for polynomial diffeomorphisms of $\mathbb{C}^2$ (including complex H\'enon mappings). Our results provided the first computability results for Julia sets in several complex variables.

In the present paper, we continue this program considering for the first time the computability of the Julia set of higher dimensional diffeomorphisms without explicitly assuming hyperbolicity. Let $f$ be a polynomial diffeomorphism of $
\mathbb{C}^2$ with dynamical degree $d>1$, where 
\[
d = d(f) = \lim_{n \to \infty} \textit{deg}(f^{n})^{1/n}. 
\]
These are the polynomial diffeomorphisms of $\mathbb{C}^2$ with interesting dynamics, see \cite{FM} for details. Recall that any such diffeomorphism is conjugate to a finite composition of generalized \Henon maps. We refer to Section 2 for more details about \Henon maps.

There are several invariant sets that capture the basic features of $f$, analogous to the case of polynomials in $\bC$. Namely, let $K^+$ resp.\ $K^-$ be the set of points whose forward resp.\ backward orbits are bounded. Define $J^{\pm} = \partial K^{\pm}$, $J=J^+\cap J^-$ and $K=K^+\cap K^-$. Both $J$ and $K$ are compact. The set $J$ is called the Julia set of $f$. 
 Note that since we study orbits of $f$ and $f^{-1}$ simultaneously, it is sufficient to consider the volume-preserving case ($|{\rm det}\, Df|=1)$ and volume-decreasing (dissipative) case ($|{\rm det}\, Df|<1$) because otherwise we could simply consider $f^{-1}$. 
A connected component $U$ of ${\rm int}\, K^+$ is called a Fatou component. A Fatou component is either periodic ($f^k(U)=U$ for some $k$) or wandering in which case $f^k(U)\cap f^l(U)=\emptyset$ for all $k,l\geq 0, k\not=l$. A periodic Fatou component is called an attracting basin if it contains an attracting periodic point $\alpha$ (of period $k$). In this case $\Vert f^n(z)-f^n(\alpha)\Vert \to 0$ for all $z\in U$.

We are now ready to present the main result of the paper.

\begin{theorem} \label{thm:main}
Let $f$ be a polynomial diffeomorphism of $\mathbb{C}^2$ with dynamical degree $d>1$ such that all Fatou components are attracting basins. Then $J_f$ is computable.
\end{theorem}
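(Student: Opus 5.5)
Since $J=J^+\cap J^-$, my plan is to compute $J^+$ and $J^-$ (intersected with a large bidisk $V_R$) separately from inside and outside, and then intersect the two approximations. I will first treat a generalized \Henon map and then pass to an arbitrary polynomial diffeomorphism, which is conjugate to a finite composition of generalized \Henon maps, by running the same argument with the usual filtration for compositions. The only inputs to the algorithm are oracle access to the coefficients and, as non-uniform information, a radius $R$ for the filtration $V_R, V^+_R, V^-_R$. Since $f$ is volume-decreasing or volume-preserving up to inversion, and in the conservative case ${\rm int}\,K^+=\emptyset$ forces $J^\pm=K^\pm$, I will concentrate on the dissipative case.

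\textbf{Step 1: outer approximation of $J^+$ (escaping and attracting regimes).} A dyadic point (or box) whose forward orbit enters $V^+_R$ certifies that it lies outside $K^+$. For the interior of $K^+$ I will use the hypothesis that every Fatou component is an attracting basin. In the dissipative case this gives at most countably many attracting cycles, and each is detectable in finite time: periodic orbits of period $n$ are solutions of a polynomial system, so boxes containing an attracting $n$-cycle together with a forward-invariant contracting ball can be certified by interval arithmetic. A box whose orbit lands in such a certified ball lies in ${\rm int}\,K^+$. Every point of ${\rm int}\,K^+\cup(\bC^2\setminus K^+)$ is eventually certified by one of these two tests. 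Hence the complement of $J^+$ is semi-decidable, which gives outer approximations of $J^+\cap V_R$. For $J^-$ I will apply the same argument to $f^{-1}$, where $K^-$ has empty interior in the dissipative case, so the escaping test alone suffices. Note that infinitely many basins only slow the enumeration; they do not break it, because each box is tested against finitely many cycles at any stage.

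\textbf{Step 2: inner approximation of $J$ (saddle regime).} I will use the theorem of Bedford--Smillie that $J^+$ is the closure of the stable manifolds of saddle periodic points, and symmetrically for $J^-$, so that $J$ is the closure of the saddle periodic points. Saddle points of period $n$ can be certified (Krawczyk or interval Newton) with rigorous enclosures of their eigenvalues. A local stable disk $W^s_{\rm loc}(p)$ can be computed as a graph via the graph transform or cone fields on a certified neighborhood, and pulled back by $f^{-n}$ to obtain points of $J^+$ densely. For $J$ itself it suffices to enumerate certified saddle points. Their union over all periods is dense in $J$, which gives the lower approximation.

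\textbf{Step 3: combining the approximations.} Computability of $J$ in the Hausdorff metric needs, for every $\ep$, a finite set that is $\ep$-close to $J$. Step 2 supplies points of $J$. Step 1 applied to $J^+$ and $J^-$ supplies a shrinking cover of $J^+\cap J^-$. I then run both searches until every box of the cover contains (within $\ep$) a certified saddle point. This terminates because the outer covers converge to $J^+\cap J^-\cap V_R$ by compactness and a nested-intersection argument, and saddles are dense in $J$.

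\textbf{Main obstacle.} The most delicate point is making Step 1 genuinely converge near $J^+$: I must show that every compact piece of ${\rm int}\,K^+$ is eventually certified, uniformly enough that the nested covers shrink to $J^+$. This requires that each point of a Fatou component enters a certified contracting ball in finite time, which needs the cycle to be strictly attracting and the basin to be exactly the component. Here the hypothesis excluding other Fatou components, such as semi-Siegel, semi-parabolic or wandering ones, is used essentially. A second subtlety, which I expect the stable-manifold step to handle, is that the outer cover might approximate $J^+\cap J^-$ only through boxes that are too coarse; this is the reason inner points of $J^\pm$ come from pulled-back stable and unstable disks.
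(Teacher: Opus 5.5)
You have a genuine gap in Step~2, and the termination claim in Step~3 depends on it. Step~1 is sound: certified escape, and certified entry into a contracting ball around a detected attracting cycle, show that $\mathbb{C}^2\setminus J^+$ and $\mathbb{C}^2\setminus J^-$ are lower semi-computable. Hence $J$ is upper semi-computable. This matches the escaping/attracting part of the paper's algorithm.

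The problem is the inference in Step~2. From $J^+=\overline{W^s(p)}$ and $J^-=\overline{W^u(p)}$ you cannot conclude that saddle points are dense in $J=J^+\cap J^-$, because closure does not commute with intersection.

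\begin{itemize}
\item By Bedford--Lyubich--Smillie, the closure of the saddle points is $J^*$, the support of the measure of maximal entropy. One only knows $J^*\subseteq J$.
\item Whether $J^*=J$ is a well-known open problem. It is settled for hyperbolic maps and some further classes. Nothing in the hypothesis that all Fatou components are attracting basins is known to force it.
\item If $J\neq J^*$, boxes around a point of $J$ at positive distance from $J^*$ survive in every outer cover. They never acquire a certified saddle within $\varepsilon$, so your search does not halt. Stopping it early would approximate $J^*$ rather than $J$.
\item Pulling back local stable disks does not help: it produces points of $J^+$, not of $J^+\cap J^-$. Certified intersections $W^s(p)\cap W^u(q)$ have closure $J^*$ again.
\end{itemize}

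Under the extra assumption $J=J^*$ (e.g.\ in the hyperbolic case), your upper/lower semi-computability argument would be complete.

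The paper's inner approximation does not use density of saddles in $J$. It outputs the grid boxes certified to meet $J^+$ and, by the same procedure for $f^{-1}$, certified to meet $J^-$. A box is certified to meet $J^+$ either by a mixed escaping/attracting subdivision or by a hit on an inner approximation of $f^{-m_k}(W^s_{\rm loc}(\alpha))$.

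This is not a drop-in repair for you, though. The paper's correctness argument (proof of Proposition~3.8) passes directly from $d(x,J^+)\le\varepsilon_N$ and $d(x,J^-)\le\varepsilon_N$ to $d(x,J)\le\varepsilon_N$. That implication does not follow from the two bounds alone. It would need something like local product structure or an effective transversality estimate near $J$. Either way, what is missing is a certified way to produce points of $J^+\cap J^-$ itself, or points quantitatively near it, that are dense in all of $J$.
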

We note that Theorem \ref{thm:main} incudes the cases of no Fatou components as well as infinitely many attracting basins. 

 It is an immediate consequence of Bedford and Smillie's classical structure result \cite{BS1} that Theorem~\ref{thm:main} implies the computability of hyperbolic Julia sets. We recall that this result was previously proven in \cite{BoydWolf-Henon} using different methods.
Moreover, Theorem~\ref{thm:main} applies to several important classes of non-hyperbolic maps. For instance, it applies to substantially dissipative \Henon maps with dominated splitting and no parabolic cycles, by the classification theorem of Lyubich and Peters \cite{LyubichPeters2021}. It also applies to other settings, such as the quasi-hyperbolic \Henon maps of Bedford and Smillie (\cite{BGS, BS8}), provided there are no wandering domains. In addition, the theorem could potentially apply to certain polynomial diffeomorphisms of $\mathbb{C}^2$ with infinitely many attracting cycles \cite{GregBuzzard}.

On the other hand, there are several explicit obstructions for the applicability of Theorem~\ref{thm:main}. In particular, it does not apply to maps with periodic Fatou components associated with neutral periodic points. Here, by neutral we mean that at least one eigenvalue of $Df^k(\alpha)$ has modulus one, where $\alpha$ is a periodic point of period $k$. Such periodic Fatou components include parabolic domains and Siegel cylinders (see Section~4 for a detailed discussion). Other potential obstructions could be the existence of Herman cylinders as well as wandering Fatou domains, although it is currently not known if either of these types of Fatou domains exist for polynomial diffeomorphisms of $\mathbb{C}^2$. Since unlike for polynomials in $\mathbb{C}$, there is currently no  classification result for the Fatou components of polynomial diffeomorphisms of $\mathbb{C}^2$ available, in principle there could be other obstructions preventing us from applying Theorem \ref{thm:main}. This is the reason why it is currently not expected to derive a classification for computability of the Julia set for all polynomial diffeomorphisms of $\mathbb{C}^2$. 

 Finally, we note that the methods used in the proof of Theorem~\ref{thm:main} are compatible with other types of periodic Fatou components, provided that these components can be computationally detected (or are given as inputs into the algorithm) and are lower semi-computable. In particular, we anticipate that the theorem might be extendable to include maps with parabolic domains, analogous to the corresponding results in one-dimensional complex dynamics.

Since any computable function must necessarily be continuous, as a consequence of Theorem~\ref{thm:main}  we obtain the following.

\begin{corollary} \label{cor:main}
Let $f_0$ be a polynomial diffeomorphism of $\mathbb{C}^2$ with dynamical degree $d>1$ such that all Fatou components of $f_0$ are attracting basins.
Then
\[
f\mapsto J_f
\]
varies continuously at $f_0$  with respect to the Hausdorff metric.
\end{corollary}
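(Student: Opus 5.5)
The plan is to derive the corollary from Theorem~\ref{thm:main} through the general principle that a computable map on a space of represented objects is continuous. The point needing care is the precise sense of computability in Theorem~\ref{thm:main}. We use the uniform version: there is one algorithm which, given an oracle for the coefficients of $f$ (a name, i.e.\ a sequence of rational approximations) together with the finitely many discrete pieces of non-uniform information it needs, such as the degrees of the generalized \Henon factors, outputs for each $n$ a finite union of rational balls within Hausdorff distance $2^{-n}$ of $J_f$.

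First I would check that the proof of Theorem~\ref{thm:main} yields this oracle-uniform form. I expect it does, since the algorithm only queries approximations of the coefficients when it separates escaping, attracting and saddle regimes. Then fix $f_0$ and $\ep>0$, and choose $n$ with $2^{-n}<\ep/2$. Run the machine on a name of $f_0$. It halts after reading the coefficients only to some finite precision $2^{-m}$.

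Now take any $f$ in the same finite-dimensional family (same degrees) whose coefficients are within $2^{-m-1}$ of those of $f_0$, and which also satisfies the hypothesis of the theorem. Then $f$ has a name that agrees with the queried prefix for $f_0$. The machine therefore produces the same output $A_n$ for $f$, so
\[
d_H(J_f,J_{f_0})\le d_H(J_f,A_n)+d_H(A_n,J_{f_0})<\ep .
\]
This gives continuity at $f_0$ among maps to which the algorithm applies.

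The main obstacle is that nearby maps $f$ need not satisfy the hypothesis: a small perturbation could create neutral cycles, and then the algorithm's correctness guarantee for $f$ is lost. To handle this, I would rely on how the algorithm in the proof certifies its output. Its output at level $n$ is justified by finitely many open, robust conditions checked at finite precision: escape-region estimates, attracting balls mapped strictly into themselves, and pieces of stable manifolds of saddle points computed with cone estimates. For the inclusion $J_f\subset A_n^{\ep}$, robustness should show that $J_f$ avoids the certified escaping and attracting regions for every $f$ near $f_0$. For the reverse inclusion, saddle points and their stable manifolds persist and move continuously, which gives lower semicontinuity of $J_f$ near $f_0$, as in Bedford--Smillie density of saddles.

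So at this step I would first try to argue directly that the certificates produced for $f_0$ at precision $2^{-m}$ remain valid for all $f$ within $2^{-m-1}$. If the algorithm's halting proof uses the global hypothesis on $f_0$ only to guarantee termination, and not to justify correctness of its output, this argument goes through.
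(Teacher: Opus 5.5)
Your first part is the paper's argument. The paper gets the corollary in one line from Proposition~\ref{prop:main}, which is already stated in the uniform oracle form you want (one machine for all of $\HNNP$), together with the fact that computable maps are continuous. Your finite-use spelling-out is correct, with one small repair at the $2^{-m-1}$ step: take a name of $f_0$ with $\|\psi(j)-f_0\|<2^{-j-1}$, so that every $2^{-m-1}$-close $f$ really has a name extending the queried prefix. As you observe, this gives continuity only among maps satisfying the hypothesis. The paper's one-line proof gives no more than that, so under that reading you are done.

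Your upgrade to all nearby maps goes beyond the paper, and your first route for it has a genuine gap.

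\textbf{The robustness half.} The inclusion $J_f\subset\mathcal{N}_\ep(A_n)$ is fine, since the escaping and attracting certificates have strict margins and persist under perturbation.

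\textbf{The lower-semicontinuity half.} Lower semicontinuity of $J$ does not follow from persistence of saddles. Saddle points are known to be dense in $J^*=\mathrm{supp}\,\mu$ (Bedford--Lyubich--Smillie), not in $J$, and whether $J=J^*$ is a well-known open problem. Persistence of saddles and of compact pieces of their stable and unstable manifolds gives lower semicontinuity of $J^*$, $J^+$ and $J^-$ only. A box that stays near both $J^+_f$ and $J^-_f$ need not stay near $J_f=J^+_f\cap J^-_f$.

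\textbf{Your second route.} This is the right one to pursue, and the paper does meet your condition. Lemmas~\ref{lem:basins}, \ref{lem:saddles} and the proof of Proposition~\ref{prop:correctness} use only the construction and $\overline{W^s(\alpha)}=J^+$, which holds for every map with $d>1$. The hypothesis $f\in\HNNP$ enters only in the halting Lemma~\ref{lem:halting}. So suppose every certified claim in the halted run for $f_0$ (Lipschitz bounds, attracting neighborhoods, stable-manifold pieces) is justified from the oracle prefix alone, i.e.\ holds for every map consistent with it. Then that output is correct for every $f$ sharing the prefix, which gives continuity at $f_0$ among all nearby maps. Be aware, though, that this route is exactly as strong as Proposition~\ref{prop:correctness}. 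Its final step passes from $d(x,J^+)\le\ep_N$ and $d(x,J^-)\le\ep_N$ to $d(x,J)\le\ep_N$, which is the same $J^+\cap J^-$ issue that undermines your first route.
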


To prove Theorem~\ref{thm:main}, we first establish the result for generalized complex \Henon mappings (see  Section 2 for the definition), then using the fact that every polynomial diffeomorphism of $\mathbb{C}^2$ with dynamical degree $d>1$ is conjugate to a finite composition of generalized \Henon mappings \cite{FM}.

The main argument is constructing an algorithm which separates the dynamics into three regimes: escaping points, attracting basins, and saddle dynamics. This is why we need, in the case that $K^+$ has interior, all Fatou components to be basins of attraction.   The escaping set is detected via forward iteration, while attracting basins are identified through convergence. In the absence of attracting cycles, the algorithm exploits the saddle structure by computing local stable manifolds and approximating $J^+$ via their backward iterates. These mechanisms together yield arbitrarily accurate approximations of $J^+$ and $J^-$, and hence of $J = J^+ \cap J^-$. 
Moreover, the construction yields computable approximations of $K$, as well as of $J^+$ and $K^+$ restricted to any bounded region in $\mathbb{C}^2$.

Finally, the existence of non-attracting periodic Fatou components,
in particular Siegel cylinders, is an essential
obstruction to the current approach,
and begins to establish the boundary between computability and non-computability in our setting. 
In Section~\ref{sec:non-computability}, we 
obtain non-computability results for polynomial skew products and H\'enon mappings exhibiting semi-Siegel dynamics. We also show that there exist semi-Siegel  \Henon maps whose parameters are computable. Thus, we parallel the one-dimensional theory and illustrate the role of neutral behavior in obstructing computability.

The paper is organized as follows. In Section~\ref{sec:prelim}, we review the necessary background on computability and \Henon dynamics. Section~\ref{sec:results} contains the proof of Theorem~\ref{thm:main2}, followed by Section~\ref{sec:generalizations} containing the extension to Theorem~\ref{thm:main}.
In Section~\ref{sec:non-computability}, we present examples illustrating the role of neutral dynamics in the failure of computability. Finally, we close with suggestions for further study in Section~\ref{sec:future}.

\section{Background and Preliminaries}
\label{sec:prelim}

In this section, we provide a brief overview of the essential definitions and properties of the material required in this paper: first on computability in general, and then on \Henon mappings. For a more detailed account of computability in complex dynamics, particularly as it relates to the computability of Julia sets of maps of two complex variables, see \cite{BoydWolf-Skew1,BoydWolf-Henon}.

\subsection{Computability}
\label{sec:compute:basic}
We briefly recall the basic notions of computability used in this paper; see 
\cite{Braverman2005,BY2009,BSW2020} for detailed treatments.

We say a function $\psi:\bN\to\bQ^\ell$ is an oracle of $x\in \bR^\ell$ if $
\|\psi(n) - x\| < 2^{-n}.
$ 
Moreover,  $x \in \mathbb{R}^\ell$ is \emph{computable} if there exists a Turing machine $T=T(n)$ which is an oracle of $x$. We note that since the set of all Turing machines is countable, most points in $\bR^\ell$ are not computable.
Identifying $\mathbb{C}^\ell$ with $\mathbb{R}^{2\ell}$ provides the corresponding notion of computability in $\mathbb{C}^\ell$.
\begin{definition}
Let $D\subset \bR^\ell$. We say
a function $f : D  \to \mathbb{R}^m$ is \emph{computable} if there is a Turing machine $T=T(\psi,n)$  that, given $x\in D$ and any oracle $\psi$ of $x$, outputs $T(\psi,n)\in\bQ^m$ such that $\|T(\psi,n)-f(x)\|<2^{-n}$.
\end{definition}
We note that in the above definition the oracle $\psi$ of $x$ is an input of the Turing machine $T$, whereas the Turing machine must have the ability to algorithmically decide when the approximation $\psi(n)$ of $x$ is good enough to guarantee the computation of $f(x)$  at precision $2^{-n}$. In this situation the Turing machine $T$ is often called an oracle Turing machine. It is a straight-forward consequence of the definition that computable functions must be continuous.

Next we introduce a computability notion for the space of compact subsets of $\mathbb{C}^m$ endowed with the Hausdorff metric, see, e.g., \cite{BY2009}.

\subsection*{Computable subsets of $\mathbb{C}^m$}

Let $\sC_m$ denote the space of all compact subsets  of $ \mathbb{C}^m$ together with the Hausdorff metric defined by
\[
d_H(A,B) = \max\left\{\sup_{a \in A} d(a,B), \sup_{b \in B} d(b,A)\right\}.
\]
We recall that $\sC_m$ is a complete separable  metric space.
\begin{definition}
We say $C\in \sC_m$ is \emph{computable} if there exists a Turing machine which, on input $n$, outputs a finite union of dyadic boxes $\Psi_n$ such that
\[
d_H(C, \Psi_n) \leq 2^{-n}.
\]
The set $\Psi_n$ is called a {\em $2^n$-approximation of $C$}.
\end{definition}
Throughout this paper, we use the $L^\infty$ norm on $\mathbb{C}^\ell \cong \mathbb{R}^{2\ell}$,
\[
\|z\| = \max_{j} \{ |\Re z_j|, |\Im z_j| \},
\]
so that ``dyadic boxes'' are balls in this metric which have centers with dyadic rational coordinates and a dyadic rational radius. We use $\cN_{\delta}(C)$ to refer to a $\delta$-neighborhood of a set $C$.

\begin{definition}
Let $D\subset \mathbb{C}^\ell$. We say a function $F:D\to \sC_m$ is computable if there exists a Turing machine $T=T(\psi,n)$ that, given $x\in D$ and any oracle $\psi$ of $x$, outputs a $2^{-n}$-approximation $T(\psi,n)=\Psi_n$ of $F(x)$. 
\end{definition}
As before, computable functions $F:D\to \sC_m$ are continuous. Finally, we introduce the notion of lower semi-computable open sets in $\mathbb{C}^\ell$.
\begin{definition}
An open set $U \subset \mathbb{C}^\ell$ is \emph{lower semi-computable} if there exists a Turing machine which outputs  a countable (possible infinite) sequence of open dyadic balls whose union is $U$.
\end{definition}

\subsection{H\'enon mappings and polynomial diffeomorphisms}

Generalized \Henon mappings are of the form
\[
f(z,w) = (p(z) - a w, z),
\]
where $p$ is a monic complex polynomial of degree $d \geq 2$ and $a$ is a non-zero complex number.
 Generalized \Henon mappings are polynomial automorphisms of $\mathbb{C}^2$ of dynamical degree $d$ (see \cite{BS1,FM}). We denote by $\mathcal{H}$ the space of generalized \Henon maps and by $\mathcal{H}_d$ the space of generalized \Henon maps where the polynomial $p$ has degree $d$.
We recall the definitions of the invariant sets $K^\pm$, $J^{\pm} = \partial K^{\pm}$, $J=J^+\cap J^-$ and $K=K^+\cap K^-$ from Section 1. We call $J^\pm$ the forward/backward Julia set of $f$, and $J$ is called the Julia set of $f$.
These sets provide the basic dynamical decomposition used in the algorithmic constructions of Section~\ref{sec:results}.

\section{Main Results on computability for generalized \Henon mappings}
\label{sec:results}

Our goal of this section is to establish:

\begin{theorem} \label{thm:main2}
Let $H$ be a generalized complex \Henon mapping such that any Fatou components are attracting basins. Then the Julia set $J_H$ of $H$ is computable.
\end{theorem}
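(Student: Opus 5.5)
Since $J=J^+\cap J^-$ and intersections of computable compact sets need not be computable, I will not compute $J^+$ and $J^-$ separately and intersect. Instead, I will build a Turing machine that, for a fixed $n$, decides for each dyadic box $B$ of size $\sim 2^{-n}$ inside a computable bidisk $V_R=\{|z|,|w|\le R\}$ containing $K$ whether $B$ is ``close to $J$'' or ``far from $J$''. The first answer must be certified by a point of $J$ within $2^{-n}$; the second by showing $B\cap J=\emptyset$. The escape radius $R$ comes from the standard filtration: outside $V_R$, points in $\{|z|\ge \max(|w|,R)\}$ escape under forward iteration and points in $\{|w|\ge\max(|z|,R)\}$ escape under backward iteration. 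So $R$ is computable from the coefficients of $p$ and $a$.

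\textbf{Certifying that a box is far from $J$.} A box $B$ is disjoint from $J$ if either (i) $B\cap K^+=\emptyset$ or $B\cap K^-=\emptyset$, detected by some iterate $H^{\pm m}(B)$ leaving $V_R$ into the escaping region, or (ii) $B\subset \mathrm{int}\,K^+$. The attracting basins give (ii). I enumerate periodic points by solving $H^k(x)=x$ with certified root isolation and interval arithmetic. For each cycle certified as attracting (both eigenvalues of $DH^k$ of modulus $<1$, checked with a margin), I compute an explicit ball $W$ on which $H^k$ is a contraction into itself. A box $B$ with $H^m(B)\subset W$ for some $m$ lies in a basin, and hence in $\mathrm{int}\,K^+$. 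This makes the union of the basins lower semi-computable. By hypothesis $\mathrm{int}\,K^+$ is exactly this union. Therefore every compact subset of $V_R\setminus J^+$ is eventually certified by (i) or (ii): the escaping set is open and exhausted by finite-time escape, and $\mathrm{int}\,K^+$ is exhausted by the basin enumeration by compactness. For $K^-$ I use only (i), since $\mathrm{int}\,K^-=\emptyset$ in the dissipative case. I may assume dissipativity or volume preservation by passing to $H^{-1}$; in the conservative case $\mathrm{int}\,K^\pm=\emptyset$ when there are no Fatou components.

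\textbf{Certifying that a box is near $J$.} Here I use saddle points. Saddle periodic points are dense in $J$ by Bedford--Lyubich--Smillie, and they can be enumerated computably with certified eigenvalue gaps. For a certified saddle $q$ I compute local stable and unstable disks $W^s_{\mathrm{loc}}(q)$ and $W^u_{\mathrm{loc}}(q)$ by a certified graph-transform or Lipschitz cone argument, with explicit error. Every saddle lies in $J$, so a box within $2^{-n-1}$ of some certified saddle may be reported. Density of saddles in $J$ then guarantees that every point of $J$ is eventually witnessed at scale $2^{-n}$. If needed, backward iterates of $W^s_{\mathrm{loc}}(q)$ and forward iterates of $W^u_{\mathrm{loc}}(q)$ supply a dense collection in $J^+$ and $J^-$ near $K$. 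Transverse intersections of these curves, certified by a fixed-point or Krawczyk test, are in $J$, and such points are dense in $J$.

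\textbf{Assembling the algorithm and the main obstacle.} The algorithm dovetails both searches over all boxes of the grid at scale $2^{-n}$ until every box is classified as near or far. The output is the union of the ``near'' boxes, fattened slightly. Termination follows from the two density and exhaustion statements above together with compactness of $V_R$: each box is either at distance $>2^{-n-2}$ from $J$, so its closure lies in $(V_R\setminus J^+)\cup(V_R\setminus J^-)$ and is eventually certified far, or it is within $2^{-n}$ of $J$ and eventually certified near. Boxes at intermediate distance may receive either label, which is harmless.

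The main obstacle is the far-certification step (ii). I need the \emph{interior of $K^+$} to be algorithmically exhausted, and this is exactly where the hypothesis is used, because it excludes Siegel cylinders and other Fatou components that cannot be detected. A related subtlety is that there may be infinitely many attracting cycles, so the enumeration must be unbounded. That is acceptable, since only compact sets need to be covered, and compactness yields finitely many basins in each covering. I will therefore first write out carefully the lemma that compact subsets of $\mathrm{int}\,K^+\cap V_R$ are covered by finitely many certified basin pre-images.
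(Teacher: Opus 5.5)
\textbf{The gap: your near-certification is not known to be complete.} Termination of your algorithm depends on this step. The result of Bedford--Lyubich--Smillie that you invoke gives density of saddle points in $J^*=\mathrm{supp}\,\mu$, the support of the measure of maximal entropy, and not in $J$. Likewise, transverse intersection points of $W^u(p)$ and $W^s(q)$ lie in $J^*$ and are dense there. One has $J^*\subset J$, but whether $J^*=J$ is a well-known open problem. In this generality it is not known to follow from the hypothesis that all Fatou components are attracting basins. Note that your argument uses that hypothesis only in the far half.

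Every witness you produce lies in $J^*$, whether it is a certified saddle or a certified transverse homoclinic or heteroclinic point. Suppose $J\neq J^*$. Then for large $n$ there is a grid box that meets $J\setminus J^*$ and lies at distance more than $2^{-n}$ from $J^*$. Such a box is never certified far, because your far test is sound and the box meets $J$. It is also never certified near, so the dovetailed search does not halt. As written, your argument proves the theorem only for maps with $J=J^*$, for instance hyperbolic ones.

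\textbf{Comparison with the paper.} Your far half is essentially the paper's Steps 3--4: escape detection with Lipschitz margins, attracting cycles of period at most $q_k\to\infty$, and pull-backs of contracting neighborhoods. Tests (i) and (ii) must be applied to sub-boxes rather than to $B$ itself, since a box far from $J$ can meet both $J^+$ and $J^-$. Your compactness argument presupposes this, and the paper does it with its sub-boxes $B'$.

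For the near half, the paper never passes through $J^*$; it certifies only proximity to $J^+$.
\begin{itemize}
\item If an attracting cycle $\alpha$ has been found, a box containing an escaping and an attracting sub-box meets $J^+$ by connectedness. The identity $J^+=\partial W^s(\alpha)$ makes this test complete.
\item Otherwise, a box meeting an inner approximation of $f^{-m_k}(\Wslocalpha)$ is $\ep_N$-close to $J^+$. The identity $J^+=\overline{W^s(\alpha)}$ \cite{BS2} makes this test complete.
\end{itemize}
Both density statements hold for every map, so this part does not need $J=J^*$. The paper then does the same for $J^-$ with $f^{-1}$ and outputs the boxes that are near both.

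That last step is exactly the one you declined, and rightly so: $d(x,J^+)\le\ep_N$ and $d(x,J^-)\le\ep_N$ do not imply $d(x,J)\le\ep_N$. The proof of Proposition~\ref{prop:correctness} simply asserts this implication. So the paper's route does not supply your missing ingredient. You would need either a proof that $J=J^*$ under the hypothesis, or a certificate for points of $J^+\cap J^-$ that are neither saddles nor homoclinic points.

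\textbf{A further point, shared with the paper.} Your reduction to $|\det DH|\le 1$ by passing to $H^{-1}$ is not harmless. If $|\det DH|>1$, then $K^+$ has measure zero, so the hypothesis holds vacuously for $H$. Meanwhile $\mathrm{int}\,K^-$ may contain, say, a Siegel cylinder of $H^{-1}$. The hypothesis must therefore also be imposed on $H^{-1}$.
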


Recall we let $\mathcal{H}$ denote the space of generalized \Henon mappings 
$
f : \mathbb{C}^2 \to \mathbb{C}^2.
$
Set
\[
\HNNP := \{ f \in \mathcal{H} :  \text{  any Fatou components of $f$ are attracting basins} \}.
\]
(We use ``C'' for ``computable''). 
 
We may assume without loss of generality that the norm of the Jacobian of $f$ is either smaller or equal than $1$, because otherwise we can simply consider $f^{-1}$. 

We now state a quantitative version of Theorem~\ref{thm:main2}, which yields computability of $J_f$.

\begin{proposition} \label{prop:main}
 There exists a Turing machine which, on input $N \in \mathbb{N}$, and oracle access to $f\in \HNNP$, outputs a finite collection of dyadic boxes $\Psi_N$ in $\Ct$ such that
\[
d_H(\Psi_N, J_f) \leq 2^{-N}.
\]
Moreover, for any $N$, the algorithm halts.
\end{proposition}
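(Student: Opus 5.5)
We approximate $J^+$ and $J^-$ separately inside a computable bidisk, and then take their intersection at a carefully chosen precision.

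\textbf{Step 1 (escape region).} From the coefficients of $p$ and $a$ we compute an explicit radius $R$. It has the standard filtration properties. First, $K\subset \Delta_R^2$. Second, if $|z|\ge \max(|w|,R)$, then $|p(z)-aw|\ge 2|z|$, so the forward orbit escapes. The symmetric statement holds for $f^{-1}$. With this $R$, a point is certified to lie outside $K^+$ as soon as some iterate lands in the escape region $V^+=\{|z|>\max(|w|,R)\}$. This certification can be carried out with interval arithmetic on dyadic boxes, so $\bC^2\setminus K^+$ is lower semi-computable.

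\textbf{Step 2 (attracting basins and saddles).} We enumerate periods $k=1,2,\dots$ and solve $f^k(x)=x$ by certified root finding. Since $|\det Df|\le 1$, every periodic point is attracting, saddle, or neutral. Using Rouché or Newton–Kantorovich bounds, we certify that a point $\alpha$ is \emph{attracting} when $\|Df^k(\alpha)\|$ has spectral radius $<1$. In that case we compute an explicit ball $B$ with $f^k(B)\Subset B$, and the boxes that eventually map into such a ball give a lower semi-computable exhaustion of $\operatorname{int}K^+$. This is exactly where the hypothesis $f\in\HNNP$ is used: every Fatou component is such a basin, so $\operatorname{int} K^+$ is the union of these pullbacks.

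For saddle points $\beta$ we compute local stable manifolds $W^s_{\mathrm{loc}}(\beta)$ as graphs, using the stable manifold theorem with explicit constants. We then pull them back by $f^{-n}$; these pullbacks lie in $J^+$. By Bedford–Smillie, saddle points are dense in $J$, and $J^+=\overline{W^s(\beta)}$ for any single saddle $\beta$. Hence for fixed $\epsilon$ the pieces $f^{-n}(W^s_{\mathrm{loc}}(\beta))\cap \Delta_R^2$ eventually become $\epsilon$-dense in $J^+\cap\Delta_R^2$.

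\textbf{Step 3 (the halting criterion).} The algorithm works on a grid of boxes of size $2^{-M}$ in $\Delta_R^2$. It runs all three procedures in parallel until every box is classified. A box is classified in one of two ways:
\begin{itemize}
\item it is certified disjoint from $J^+$, because it lies entirely in the escaping set or entirely in a certified attracting basin; or
\item it is certified to meet a neighborhood of $J^+$, because it contains a point of a pulled-back stable manifold.
\end{itemize}
Termination is the key point. $K^+\cap\Delta_R^2$ is compact, and $K^+\setminus J^+=\operatorname{int}K^+$ is covered by the basins. Therefore every box at positive distance from $J^+$ is eventually certified by compactness. Every box close to $J^+$ eventually picks up a stable-manifold point, by the density from Step 2. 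After slightly enlarging the boxes, this yields a Hausdorff approximation of $J^+\cap\Delta_R^2$. The same procedure applied to $f^{-1}$ approximates $J^-$. For $f^{-1}$ only the escape and saddle mechanisms are needed: $K^-$ has no interior when $|\det Df|<1$, and in the conservative case there are no attracting points at all.

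\textbf{Step 4 (the main obstacle).} The hard part is approximating $J=J^+\cap J^-$ itself. Hausdorff approximations of two sets do not give a Hausdorff approximation of their intersection, since $\mathcal N_\epsilon(J^+)\cap\mathcal N_\epsilon(J^-)$ may be much larger than $\mathcal N_\epsilon(J)$.

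To get points certifiably near $J$ (the lower bound), I would use transverse intersections $W^s(\beta)\cap W^u(\beta)$ of computed stable and unstable manifold pieces. These are homoclinic points, and they lie in $J$. They are dense in $J$ by Bedford–Lyubich–Smillie, so the lower bound is semi-computable.

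For the upper bound, I would argue that a box is far from $J$ as soon as it is certified disjoint from $J^+$ or from $J^-$ via Step 3. By compactness of $J$, every box at positive distance from $J$ is eventually certified this way.

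The algorithm then searches over $M$ until every box of the grid is either certified within $2^{-N-1}$ of a homoclinic point or certified disjoint from $J^+$ or from $J^-$. It outputs the former boxes. Compactness arguments of the kind in Step 3 guarantee that this search halts.
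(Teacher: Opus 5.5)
\textbf{Gap: homoclinic points are only known to be dense in $J^*$, not in $J$.}
Your Steps 1--3 and the upper-bound half of Step 4 are essentially the paper's mechanism. That is: escape certification, pullbacks of contracting neighborhoods of certified attracting cycles, pullbacks of a computed local stable manifold, and the observation that a region certified to miss $J^+$ or $J^-$ misses $J$.

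One small repair first. A fixed box at positive distance from $J$ can meet both $J^+$ and $J^-$, so it is never itself certified disjoint from either. The compactness step should be a Lebesgue-number argument for the cover of $\{x: d(x,J)\ge\delta\}$ by $\bC^2\setminus J^+$ and $\bC^2\setminus J^-$; your shrinking mesh $2^{-M}$ supplies this.

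The genuine gap is the lower bound in Step 4. Bedford--Lyubich--Smillie prove that saddle points and transverse homoclinic intersections are dense in $J^*=\operatorname{supp}\mu$, not in $J$. Whether $J^*=J$ is a well-known open problem. It is settled only in special classes such as hyperbolic maps, and it is not known to follow from $f\in\HNNP$. Your remark in Step 2 that saddle points are dense in $J$ has the same issue, though you do not use it there.

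If $J^*\subsetneq J$, take $x\in J\setminus J^*$. Once $2^{-N-1}$ plus the mesh is below $d(x,J^*)$, the grid box containing $x$ is never within $2^{-N-1}$ of a homoclinic point. It also can never be certified disjoint from $J^+$ or $J^-$, since it contains $x$. So your search never halts. What your argument actually gives is that $J$ is upper semi-computable and $J^*$ is lower semi-computable. This proves the proposition only under the extra assumption $J=J^*$. To cover all of $\HNNP$ you would need a certifiable dense subset of $J$ itself, or a proof that $J=J^*$ in this class.

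\textbf{Comparison with the paper.}
The paper does not get around your Step 4 obstacle either. Its algorithm outputs the grid boxes of side $2^{-N}$ that are both $J^+$-type and $J^-$-type. Its correctness proof then passes from $d(x,J^+)\le\ep_N$ and $d(x,J^-)\le\ep_N$ directly to $d(x,J)\le\ep_N$. That is exactly the inference you rightly reject: it fails wherever $J^+$ and $J^-$ come within $\ep_N$ of each other far from $J^+\cap J^-$, e.g.\ near an almost-tangency of stable and unstable leaves. So your diagnosis of where the difficulty lies is sharper than the paper's argument. Still, neither route, as written, proves the statement for every $f\in\HNNP$.
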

We note that Proposition \ref{prop:main} is formally a stronger result compared to Theorem \ref{thm:main2} as it includes that there is one Turing machine which computes the Julia set of all $f\in \HNNP$.

The proof of Proposition~\ref{prop:main} is constructive, and is based on the algorithm described in Subsection~\ref{sec:algorithm}. First, in the following subsection, we establish some helpful lemmas on computability aspects for \Henon maps, which we state separately as they may be of some independent interest.

\subsection{Computability and \Henon Lemmas}
\label{sec:computabilitylemmas}

The foundation of our approach is to exploit the following dynamical dichotomy.

\begin{lemma} \label{lem:dichotomy}
Let $f \in \HNNP$. Then one and only one of the following holds:

\begin{enumerate}
\item $f$ has an attracting periodic point, in which case ${\rm int}\, K^+ \neq \emptyset$, and every point of ${\rm int}\, K^+$ lies in an attracting basin.  

\item $f$ has no attracting periodic points, in which case $K^+=J^+$, and for any saddle periodic point $\alpha$,
\begin{equation}\label{BS2}
    J^+=\overline{W^s(\alpha)}.
\end{equation}
\end{enumerate}
\end{lemma}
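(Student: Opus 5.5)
The two cases are distinguished by whether $f$ has an attracting periodic point, so they are mutually exclusive. It remains to verify the additional assertions in each case, using the hypothesis $f\in\HNNP$ and classical results of Bedford--Smillie.

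\emph{Case (1).} Suppose $\alpha$ is an attracting periodic point of period $k$. Then some small ball $B$ around $\alpha$ satisfies $f^k(B)\subset B$. The forward orbit of every point of $B$ stays bounded, so $B\subset K^+$ and ${\rm int}\,K^+\neq\emptyset$. Now let $z\in{\rm int}\,K^+$. Then $z$ lies in some Fatou component $U$. By the definition of $\HNNP$, $U$ is an attracting basin, so $z$ lies in an attracting basin. Nothing further is needed here.

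\emph{Case (2).} Suppose $f$ has no attracting periodic point. We first show ${\rm int}\,K^+=\emptyset$. If not, pick a Fatou component $U$. By hypothesis $U$ is an attracting basin, so it contains an attracting periodic point, a contradiction. Hence $K^+={\rm int}\,K^+\cup\partial K^+=J^+$, using that $K^+$ is closed.

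For the stable-manifold identity \eqref{BS2}, fix a saddle periodic point $\alpha$; replacing $f$ by $f^k$ changes neither $K^+$ nor $W^s(\alpha)$, so we may take $\alpha$ fixed. The inclusion $\overline{W^s(\alpha)}\subset J^+$ is the easy direction. Points of $W^s(\alpha)$ have bounded forward orbits, so $W^s(\alpha)\subset K^+=J^+$, and $J^+$ is closed. For the reverse inclusion $J^+\subset\overline{W^s(\alpha)}$ I would invoke the Bedford--Smillie theorem (\cite{BS1}, together with its companion papers): for any saddle periodic point $p$ of a Hénon map, $\overline{W^s(p)}=J^+$. Its proof uses the stable current $\mu^+=\frac{1}{2\pi}dd^cG^+$. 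The normalized currents of integration on disks in $W^s(p)$ converge to $\mu^+$, and the support of $\mu^+$ is exactly $J^+$. This classical result holds with no extra hypotheses and so gives \eqref{BS2} directly. Since it is a published result, I would simply cite it. The main care is making sure the cited statement is the unconditional one and applies to generalized Hénon maps, not only quadratic ones; the Bedford--Smillie framework is set up for generalized Hénon maps, so this should be fine.

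Finally, existence of a saddle periodic point, so that case (2) is not vacuous, follows from Bedford--Lyubich--Smillie, by which saddle points are dense in $J$. The statement is phrased as ``for any saddle'', so existence is not strictly needed.
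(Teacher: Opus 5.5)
Your proposal is correct and is essentially the paper's proof: (1) is immediate from the definition of $\HNNP$ (your check that ${\rm int}\,K^+\neq\emptyset$ is fine once the invariant neighborhood of $\alpha$ is taken as a ball in a norm adapted to $Df^k(\alpha)$). In (2), the absence of attracting cycles rules out Fatou components, so $K^+=J^+$, and $J^+=\overline{W^s(\alpha)}$ is then quoted from Bedford--Smillie; the paper cites this as \cite{BS2} (the stable-manifolds paper) rather than \cite{BS1}. One side remark is overstated: Bedford--Lyubich--Smillie prove density of saddle points in $J^*={\rm supp}\,\mu$, and $J^*=J$ is not known in general, though this still gives existence of saddles, which, as you say, the lemma does not need.
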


\begin{proof}
(1) follows immediately from the definition of $\HNNP$. 

For (2), if $f\in \HNNP$ has no attracting periodic points,
there can be no Fatou components at all. Hence
${\rm int}\,K^+=\emptyset$, thus
$
K^+=J^+.
$

Now, let $\alpha$ be a saddle periodic point.
By Bedford--Smillie \cite{BS2},
$
J^+=\overline{W^s(\alpha)}.
$
(in fact, this is true for all polynomial diffeomorphisms of dynamical degree $d>1$).  

Finally, note the two alternatives are mutually exclusive.
\end{proof}
In case (2), periodic points need not all be saddle points.
In principle, for example, Cremer or semi-Cremer periodic points may occur.
However, this does not affect the proof since the algorithm uses
the approximation of $J^+$ by stable manifolds of saddle points.
It is conjectured that Cremer points are in general not associated with Fatou components but, to the best of our knowledge, this has not yet been proven.

\begin{lemma} \label{lem:Wsloc-saddle-computable}
Let $f\colon \mathbb{C}^2 \to \mathbb{C}^2$ be a generalized complex \Henon map. Let $\alpha \in \mathbb{C}^2$ be a saddle periodic point, and assume without loss of generality (by replacing $f$ with an iterate) that $\alpha$ is fixed. Then the local stable manifold $\Wslocalpha$ is computable. Let $V \subset \mathbb{C}^2$ be a closed box which contains $\Wslocalpha$.
Then there exists Turing machine which, on input $m,n\in\bN$ computes the set
\[
f^{-m}(\Wslocalpha) \cap V
\]
at precision $2^{-n}$.
\end{lemma}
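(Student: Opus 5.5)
We first make the local stable manifold computable, and then obtain its backward images by pulling back approximations through the explicit inverse map. Since $f$ is a generalized \Henon map, both $f(z,w)=(p(z)-aw,z)$ and $f^{-1}(z,w)=(w,(p(w)-z)/a)$ are polynomial maps with coefficients computable from an oracle for $f$. From oracles for $f$ and for $\alpha$ we compute $Df(\alpha)$ and its eigenvalues $|\lambda_s|<1<|\lambda_u|$, together with the eigendirections $E^s,E^u$. These vary continuously in the data, since the eigenvalues are distinct. Passing to coordinates centered at $\alpha$ and adapted to $E^s\oplus E^u$, we choose a computable radius $r>0$ for which $f$ is uniformly close to its linear part on the bidisk $B_r=\Delta^s_r\times\Delta^u_r$. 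Concretely, we bound the second-order terms by explicit computable estimates on $p''$ and require the cone fields of a fixed aperture to be invariant. With this choice, $\Wslocalpha$ is the graph of a holomorphic map $\phi:\Delta^s_r\to\Delta^u_r$ with small Lipschitz constant.

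To compute $\phi$ we use the graph transform. Start from the graph of $\phi_0\equiv 0$. Let $\phi_{k+1}$ be the graph whose image under $f$ lies in the graph of $\phi_k$; equivalently, the graph of $\phi_k$ is transformed by $f^{-1}$ and cut back to $B_r$. This is a contraction in the $C^0$ norm with a computable rate $\theta<1$, which depends on $|\lambda_s|/|\lambda_u|$ and the cone estimates. Hence $\|\phi_k-\phi\|\le C\theta^k$, with $C,\theta$ computable.

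An alternative gives a more concrete description. A point $x\in B_r$ lies within $C\theta^k$ of $\Wslocalpha$ if its first $k$ forward iterates stay in $B_r$. Conversely, points whose iterates leave $B_r$ are at definite distance from $\Wslocalpha$, by cone expansion in the unstable direction. So a dyadic-box approximation of $\Wslocalpha$ at precision $2^{-n}$ is produced as follows. Cover $B_r$ by boxes of size $\ll 2^{-n}$ and keep those boxes $Q$ for which interval arithmetic shows that $f^j(Q)$ meets a slightly enlarged $B_r$ for all $j\le k(n)$. This proves the first claim.

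For the backward iterates we pull back directly. Fix $m,n$. Since $f^m$ is polynomial with computable coefficients, on the compact box $V$ we can compute a Lipschitz bound $L_m$ for $f^m$ on a neighborhood of $V$. A point $x\in V$ satisfies $x\in f^{-m}(\Wslocalpha)$ iff $f^m(x)\in\Wslocalpha$. The algorithm tiles $V$ by dyadic boxes $Q$ of side $\delta\ll 2^{-n}$. It then uses an approximation of $\Wslocalpha$ at precision $\eta$ with $\eta\ll\delta$, and keeps $Q$ if the rigorous enclosure of $f^m(Q)$ (of diameter at most $L_m\delta$) meets the $\eta$-neighborhood of that approximation.

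Every box meeting the true set is kept, so the output contains the true set. The difficulty is the converse: showing that kept boxes lie within $2^{-n}$ of $f^{-m}(\Wslocalpha)\cap V$. This is the main obstacle, because $f^{-m}$ can be highly non-uniform and the preimage set could have near-misses of $\Wslocalpha$ near $\partial V$ or near the edge of the local disk. To handle it we use that $\Wslocalpha$ is a transversal holomorphic disk. The function $x\mapsto u(f^m(x))-\phi(s(f^m(x)))$ is holomorphic on $f^{-m}(B_r)\cap V$ and vanishes exactly on our set. Near-zeros of such a function lie close to its zeros, quantitatively, by a computable Łojasiewicz/Cauchy-estimate (Rouché-type) bound on slightly shrunken compact sets.

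We resolve the boundary effects by taking a slightly larger disk $W^s_{r'}$ with $r'>r$ and a slightly larger box $V'$. We then accept the resulting Hausdorff-approximation, after refining $\delta,\eta$ until the certified bound falls below $2^{-n}$. The procedure halts because all quantities involved are explicitly computable from $m$, $n$ and the oracles for $f$ and $\alpha$.
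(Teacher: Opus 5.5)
Your first part (adapted coordinates, invariant cones, graph transform with a computable contraction rate) is the argument the paper sketches. For the backward images you run a preimage test on a tiling of $V$, whereas the paper pushes a box cover of $W^s_{\mathrm{loc}}(\alpha)$ forward under $f^{-1}$ with its box-image algorithm. That is a legitimate alternative, but the obstacle you single out is mostly not where the difficulty lies. Suppose the enclosure of $f^m(Q)$ meets the $\eta$-neighborhood of an $\eta$-approximation of $W^s_{\mathrm{loc}}(\alpha)$. Then there is a point $y$ of the closed disk $W^s_{\mathrm{loc}}(\alpha)$ such that every $x\in Q$ satisfies $d(f^m(x),y)\le 2\eta+L_m\delta$. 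Hence $d\bigl(x,f^{-m}(y)\bigr)\le L_m'(2\eta+L_m\delta)$, where $L_m'$ is a computable Lipschitz bound for $f^{-m}$ on a neighborhood of $W^s_{\mathrm{loc}}(\alpha)$. This handles both the non-uniformity of $f^{-m}$ and the edge of the local disk, and no \L{}ojasiewicz or Rouch\'e-type estimate is needed. A minor point: contrary to your alternative description of $W^s_{\mathrm{loc}}(\alpha)$, points arbitrarily close to it do leave $B_r$. Leaving within $k$ steps only forces a distance of order $r|\lambda_u|^{-k}$.

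The genuine gap is $\partial V$. Your procedure certifies $f^{-m}(W^s_{\mathrm{loc}}(\alpha))\cap V\subset\Psi\subset\mathcal{N}_{2^{-n}}\bigl(f^{-m}(W^s_{\mathrm{loc}}(\alpha))\bigr)$. It does not certify $\Psi\subset\mathcal{N}_{2^{-n}}\bigl(f^{-m}(W^s_{\mathrm{loc}}(\alpha))\cap V\bigr)$, because the point $f^{-m}(y)$ may lie just outside $V$, far from the part of the curve inside $V$. Replacing $V,r$ by $V',r'$ does not repair this; it changes the target set. For compact $A$ one does have $A\cap V'\to A\cap V$ as $V'\downarrow V$, but with no computable rate, so there is no ``certified bound'' to refine toward: intersecting with a fixed box is not a Hausdorff-continuous operation. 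As a local model, the holomorphic piece $\{(R+\tau+t+iy_0,\,R-t+iy_1):|t|<\rho\}$ with $|y_0|,|y_1|<R$ misses $[-R,R]^4$ for every $\tau>0$ (at distance about $\tau/2$), but meets it in a real segment when $\tau=0$. No finite-precision test can tell you what to output near such a contact. By the maximum principle such contacts cannot occur at interior points of a $3$-dimensional face of $V$, but nothing rules them out along lower-dimensional faces or along the boundary circle $f^{-m}(\partial W^s_{\mathrm{loc}}(\alpha))$. The paper's sketch does not close this either; it simply declares the intersection with $V$ ``trivially computable''. What both your pullback argument and the paper's box-image argument do prove is the computability of $f^{-m}(W^s_{\mathrm{loc}}(\alpha))$ itself (run your tiling on a box containing it), together with the one-sided inclusions above. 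That is what Step~5 of Algorithm~\ref{alg:main} actually needs, since there one only tests whether a box meets a set certified to lie close to $W^s(\alpha)$.
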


\begin{proof}Variants of this Lemma are well-known, e.g., \cite{GZB}. Therefore we provide only a sketch.
Since $\alpha$ is a saddle fixed point, the derivative $Df(\alpha)$ has one eigenvalue of modulus less than one and one greater than one.

By the stable manifold theorem (see, e.g., \cite{KatokHasselblatt}), there exists a local stable manifold $\Wslocalpha$, which may be represented as the graph of a holomorphic function over the stable eigenspace.

This graph can be constructed via a graph transform procedure, which iteratively maps candidate graphs under $f$ and contracts in a suitable function space. Since $f$ and $Df$ are computable, each step of this iteration can be carried out with arbitrary precision, and the contraction guarantees effective convergence. Thus $\Wslocalpha$ is computable as a compact subset of $\mathbb{C}^2$.

To compute $f^{-m}(\Wslocalpha) \cap V$, to  precision $2^{-n}$, we apply the box-image algorithm that is described in Step (3) of Algorithm 3.2 of \cite{BoydWolf-Skew1} to the inverse map $f^{-1}$. At each step, we approximate the image of finitely many sub-boxes, controlling the error using Lipschitz bounds as in Lemma 2.16 and Corollary 2.17 of \cite{BoydWolf-Skew1}.

The intersection of these images with the fixed compact set $V$ is trivially computable. Therefore the sets
\[
f^{m}(\Wslocalpha) \cap V
\]
are computable.
\end{proof}

\begin{lemma} \label{lem:Wsloc-attr-computable}
Let $f\colon \mathbb{C}^2 \to \mathbb{C}^2$ be a generalized complex H\'enon map. If $f$ has an attracting periodic cycle, then its basin of attraction is lower semi-computable.
\end{lemma}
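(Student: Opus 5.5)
The plan is to write the basin $B(\alpha)$ of an attracting cycle $\{\alpha, f(\alpha),\dots,f^{k-1}(\alpha)\}$ as a computable union of preimages of a small, explicitly certified trapping neighbourhood. Then we enumerate all dyadic boxes that can be verified to land in that neighbourhood under some iterate. As in Lemma~\ref{lem:Wsloc-saddle-computable}, we replace $f$ by $g=f^k$, so that $\alpha$ is an attracting fixed point of $g$. We treat $\alpha$ (or a dyadic approximation of it to any desired precision) as available, since $\alpha$ is an isolated solution of the polynomial system $g(z)=z$ with computable coefficients. Because both eigenvalues of $Dg(\alpha)$ have modulus $<1$, there is an adapted norm in which $\|Dg(\alpha)\|<1$. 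Equivalently, some iterate $g^{\ell}$ satisfies $\|Dg^{\ell}(\alpha)\|_\infty<1$ in the paper's $L^\infty$ norm.

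\textbf{Step 1: a certified trapping box.} The algorithm searches over $\ell\in\NN$ and dyadic radii $r>0$ for a closed dyadic box $D=\overline{B(\alpha',r)}$ with $\alpha'$ dyadic close to $\alpha$. It checks, using interval arithmetic on the polynomial map $g^{\ell}$ and its derivative (Lipschitz bounds as in Lemma 2.16 and Corollary 2.17 of \cite{BoydWolf-Skew1}), that
\[
g^{\ell}(D)\subset B(\alpha',\lambda r)\quad\text{for some dyadic }\lambda<1 .
\]
Continuity of $Dg^{\ell}$ at $\alpha$ shows that this check succeeds for some $\ell$ and all sufficiently small $r$, so the search halts. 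Once $g^{\ell}(D)\subset D$ with strict contraction, $D\subset B(\alpha)$, because $g^{\ell}$-orbits starting in $D$ converge to the unique fixed point of $g^{\ell}$ in $D$, which is $\alpha$.

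\textbf{Step 2: enumeration.} We have
\[
B(\alpha)=\bigcup_{m\ge 0} g^{-m}(\operatorname{int} D).
\]
Indeed, every point of the basin eventually enters the open set $\operatorname{int}D$, and conversely such points converge to $\alpha$. The basin of the whole cycle for $f$ is $\bigcup_{j<k} f^{-j}$ of this set. We dovetail over all pairs $(m,Q)$, where $Q$ is an open dyadic box in $\Ct$. For each pair we compute a rigorous enclosure of $g^{m}(\overline Q)$ by the box-image algorithm of \cite{BoydWolf-Skew1}, now applied to $g$ rather than $g^{-1}$. Whenever the enclosure lies in $\operatorname{int}D$, we output $Q$.

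Every output box lies in $B(\alpha)$. Conversely, let $z\in B(\alpha)$. Then $g^m(z)\in\operatorname{int}D$ for some $m$. Continuity of $g^m$, together with the fact that the enclosure error tends to $0$ with the box size, gives a small dyadic box $Q\ni z$ that will be certified. So the union of the output boxes is exactly $B(\alpha)$, which proves lower semi-computability.

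\textbf{Main obstacle.} The main obstacle is Step 1: certifying effectively, from oracle data alone, that $D$ is genuinely a trapping region. This requires a computable bound on $\|Dg^{\ell}-Dg^{\ell}(\alpha)\|$ on $D$ via explicit polynomial coefficient bounds. It also requires handling the case of non-diagonalizable or near-resonant $Dg(\alpha)$, which is why we pass to the iterate $g^{\ell}$ rather than insisting on $\ell=1$. The rest is routine interval arithmetic.
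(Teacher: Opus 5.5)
Your proof is correct, and its skeleton matches the paper's. You pass to an iterate so that $\alpha$ is fixed, certify via derivative bounds a small box around $\alpha$ on which some iterate contracts, and write the basin as $\bigcup_{n\ge 0} g^{-n}(\operatorname{int}D)$.

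The difference is in how you enumerate this union. The paper approximates each preimage $f^{-n}(U)$ by running the box-image algorithm on the inverse map $f^{-1}$, and then takes the increasing unions $\mathcal{U}_n=\bigcup_{j\le n} f^{-j}(U)$. You never invert the map: you certify a dyadic box $Q$ by checking that an enclosure of its forward image $g^m(\overline Q)$ lies in $\operatorname{int}D$.

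Your route gives directly what lower semi-computability asks for. Every output box provably lies in the basin, and exhaustion follows from continuity of $g^m$ together with shrinking enclosure errors. By contrast, an approximation of $f^{-n}(U)$ to given Hausdorff precision need not lie inside $f^{-n}(U)$. The paper therefore needs an extra inner-shrinking step, removing a $\xi$-neighbourhood of the boundary, which it only carries out later in Step 3(b) of Algorithm~\ref{alg:main}. It also needs an argument that the shrunk sets still exhaust the basin. The paper's form has its own advantage: at each stage it produces a finite union of boxes approximating specific preimages, which is the shape reused in the algorithm.

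There is one small point to tighten in your Step 1. The inclusion $g^{\ell}(D)\subset B(\alpha',\lambda r)$ alone does not give convergence of orbits to $\alpha$ by an elementary argument. You have two ways to close this:
\begin{itemize}
\item also certify $\sup_{D}\|Dg^{\ell}\|<1$, so that the mean value inequality on the convex box $D$ gives a genuine contraction and Banach's theorem applies (your mention of the derivative suggests this is what you intend); or
\item invoke the Earle--Hamilton theorem, which applies because $g^{\ell}$ is holomorphic and maps $\operatorname{int}D$ into a set at positive distance from $\partial D$.
\end{itemize}
In either case, choose $\alpha'$ with $|\alpha'-\alpha|$ small compared to $r$, so that $\alpha\in\operatorname{int}D$ and the unique fixed point is indeed $\alpha$.
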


\begin{proof}
Let $\alpha$ be an attracting periodic point, and assume without loss of generality (by replacing $f$ with an iterate) that $\alpha$ is fixed. We then have 
$
|||Df^l(\alpha)||| < 1
$
for some computable $l\in \bN$.
By continuity of $Df^l$, there exists a box  $U$ centered at $\alpha$ such that $f^l$ is uniformly contracting on $U$. Such a box can be computed using derivative bounds as in Lemma 2.16 of \cite{BoydWolf-Skew1}. We also may assume without loss of generality $l=1$ since otherwise we replace $f$ with $f^l$.
It follows that the basin of attraction of $\alpha$ is given by
\[
\mathcal{U}(\alpha) = \bigcup_{n \geq 0} f^{-n}(U).
\]

For any $n$, we can compute an approximation of $f^{-n}(U)$ using the box-image algorithm applied to $f^{-1}$, with any given precision.

This yields a sequence of computable sets
\[
\mathcal{U}_n := \bigcup_{j=0}^n f^{-j}(U),
\]
with $\mathcal{U}_n \subset \mathcal{U}_{n+1}$ and
$
\bigcup_{n} \mathcal{U}_n
$
coincides with $ \mathcal{U}(\alpha)$.
We conclude that $\mathcal{U}(\alpha)$ is 
lower semi-computable.
\end{proof}

\subsection{Algorithm for computing the Julia set of a  generalized \Henon mapping in $\HNNP$}
\label{sec:algorithm}

We now present the algorithm which computes the Julia set $J = J_f$ for $f \in \HNNP$. Proof of its correctness, and halting, will follow in the subsequent subsection. 

\begin{algorithm}[\textbf{Computing a $2^{N}$-approximation of $J_f$}]
\label{alg:main}
Assume we have been given oracle access to $f \in \HNNP$, and a desired precision $2^{-N}$ for some $N \in \mathbb{N}$.

\textbf{Step 1.} (boxes)
Compute a large, dyadic box $V = [-R,R]^4 \subset \mathbb{C}^2$ such that $K_f$ is contained in $V$ (see \cite{BoydWolf-Henon} for a discussion of how to do this).

\medskip

Partition $V$ into a (uniform) grid of dyadic boxes $\mathcal{B}_N = \{B\}$ of sidelength $\varepsilon_N := 2^{-N}$.

\medskip

\textbf{Step 2.} (initialize $k$-loop)
Formulate increasing sequences of positive integers
\[
n_k, \; m_k, \; \ell_k, \; q_k,  \to \infty,
\]
and a decreasing sequence of dyadic rationals $ \xi_k \to 0$,
such that
$\xi_1 \ll \ep_N.$ 

Set $k=1$. 

\textbf{Step 3.} (attracting basins)
First, compute all periodic points of period at most $q_k$  (to a precision which is small compared to $\xi_k$, like ${2^{-2^k}}\xi_k $).

Then, for each periodic point, test the eigenvalues of the derivative (of appropriate iterate) norms and mark which, if any, periodic points are determined to definitively be attracting.

If no attracting periodic points are found, stop and proceed to the next Step. Else,
for each attracting periodic point, $\alpha$, that is found:

\begin{enumerate}
\item[(a)] Compute a neighborhood $U(\alpha)$  contained in the immediate basin of attraction of $\alpha$ (as in the proof of Lemma~\ref{lem:Wsloc-attr-computable}).

\item[(b)] As in the proof of Lemma~\ref{lem:Wsloc-attr-computable}, compute approximations of $\alpha$'s basin of attraction of the form
\[
\mathcal{U}_k(\alpha) := \bigcup_{i=0}^{\ell_k} f^{-i}(U(\alpha)),
\]
 where first we compute an approximation to each set $f^{-i}(U)$ with accuracy $\xi_k$ (using Lipschitz bounds to control errors) and then since we're attempting to exhaust $\cU_k(\alpha)$ from the inside, we refine the approximation by removing points within $\xi_k$ of the boundary of the approximation. That produces sets $\cU_k(\alpha)$ which definitely lie in ${\rm int}\, K^+$.

\end{enumerate}

Define $\mathcal{A}_k$ by taking the union of the sets $\mathcal{U}_k(\alpha)$ over all identified attracting periodic points $\alpha$.
it follows that $\cA_k \subset {\rm int}\, K^+$. 
Note $\mathcal{A}_k$ may be be empty; for example, if there are no attracting periodic points found in this step (for this $k$; it could be, for example, that all attracting periodic points are of period higher than $q_k$).

\medskip

\textbf{Step 4.} (escaping/attracting basins)

First, compute a Lipschitz bound $L_k$ for $f^{n_k},$ 
which works for all points in $V$ within a distance of $\ep_N$ of each other (find this bound as is done in Lemma 2.16 and Corollary 2.17 of \cite{BoydWolf-Skew1}).

For each $B \in \cB_N$, we want to subdivide $B$ into a uniform grid of dyadic sub-boxes $B'$, where the size of the sub-boxes is determined as follows. 

By choice of $L_k$, we have:
\[
\mathrm{diam}(f^{n_k}(B')) \leq L_k \cdot \mathrm{diam}(B').
\]

We require $B'$ so small that 
$L_k \cdot \mathrm{diam}(B') \leq \ep_N$.

Now, for each $B\in \cB_N$,  
then for each sub-box $B'$ of  $B$, let $c_{B'}$ denote its center. Then:

\begin{enumerate}

\item[(a)] from $i=1$ up to $i\leq n_k$, test whether 
the image of the center ($f^{i}(c_{B'})$) lies outside of $V$ 
 {\em and} if so, whether this image is a distance 
 $>(L_k+1) \varepsilon_N$ 
 from $\partial V$. 

If the answer to both questions  is yes for $B'$, for some $i$, then 
we mark $B'$ as an escaping-type sub-box (for level $k$).

\item[(b)] Else, if the answer to either question in (a) is no, then (if $\cA_k\neq \emptyset$) from $i=1$ for $i\leq n_k$, test whether $f^{i}(c_{B'}) \in \cA_k$; {\em and} if so, whether it is a distance 
$>(L_k+1) \varepsilon_N$
from $\partial \cA_k$. (This boundary distance is easy to calculate, as $\cA_k$ is a union of boxes.) 

If the answer to both of these questions is yes for some $i\leq n_k$, then 
we mark $B'$ as an attracting-type sub-box  (for level $k$). 

\end{enumerate}

Now:

(i) Once we have examined all $B'$s in a $B$, if {\em all} $B'$s in a $B$ were marked as the same type (escaping or attracting), then we classify $B$ as that type  (for level $k$).

(ii) On the other hand, for a $B$ if it both had one escaping-type sub-box and another attracting-type sub-box, then we mark $B$ as $J^+$-type  (for level $k$). 

\medskip

Let $\mathcal{B}_k^{\mathrm{surv}}$ denote the collection of boxes $B$ {\em not} marked as escaping-type  (for level $k$). 
(This set cannot be empty.)

Finally, if $\cA_k=\emptyset,$ we proceed to Step 5. (Note this means no boxes were found of attracting or $J^+$-type  (for level $k$), which could happen even if there are attracting periodic orbits but our $k$ is simply not large enough yet).

Else, if $\cA_k\neq \emptyset,$ skip the next step and proceed to Step~6.

\medskip

\textbf{Step 5.} (saddle-based detection)

If we get to this step, then $\cA_k=\emptyset$, i.e., no attracting periodic point is detected for this $k$. This means no boxes were marked as $J^+$-type  (for level $k$) above, so we handle that in this step.
Proceed as follows:

\begin{enumerate}
\item[(a)] Compute a saddle fixed point $\alpha$ (pass to an iterate of $f$ if needed; every generalized \Henon map has saddle periodic points), to an accuracy appropriate to $k$ (and increasing with $k$). 
\item[(b)] Compute a local stable manifold $\Wslocalpha$, using Lemma~\ref{lem:Wsloc-saddle-computable}. (Again, accuracy increases as $k$ increases.)

\item[(c)] Compute an approximation of
\[
f^{-m_k}(\Wslocalpha) \cap V,
\]
as a union of closed boxes, to a total accuracy $\xi_k$.  See Lemma~\ref{lem:Wsloc-saddle-computable}.
Then shrink the set by removing the $\xi_k$-neighborhood of the boundary, and let the smaller set be denoted by  $\mathcal{W}_k$ (so that we are guaranteed that $\cW_k\subset f^{-m_k}(\Wslocalpha) \cap V$). 
\end{enumerate}

Now, for each $B \in \cBsurvk$, in this case, we  mark $B$ as $J^+$-type  (for level $k$) if 
$B \cap \mathcal{W}_k\neq \emptyset$.

\medskip

We remark that, in contrast to the attracting case, where openness permits a center-point test with a margin, the saddle-based detection in this Step relies on set-intersection tests with computable approximations.

Finally, look at all boxes $B$ in $\cBsurvk$. If there are boxes not marked as a type  (for level $k$), stop this loop, increase $k$ to $k+1$ and start back up at Step 3. (Note, for efficiency, if a box was marked as a type for level $k$, it is still that type for higher level you need only examine the boxes for higher $k$ that weren't yet classified.)

\medskip

\textbf{Step 6.} (backward computation of $K^-$)

If this is the first visit to this Step for this $k$, repeat from Step 3 with $f$ replaced by $f^{-1}$ (and $K^+/J^+$ replaced by $K^-/J^-$, etc.), (potentially) providing a second $k$-level classification the boxes $B$ in $\cB_N$ (or exiting if some boxes cannot be backward classified). (This step helps to compute approximations of the complement of $K^-$, ${\rm int}\, K^-$,  and $J^-$.)

If this is the second visit to this step, simply proceed to the next step. 

\medskip

\textbf{Step 7.} (current approximation and termination)

If we get to this step, then we were able to classify all boxes in $\cB_N$ with both a forward and a backward type: escaping, attracting, or $J^\pm$ (for some level $k$). 

Now, we define $\Psi_N$ as the collection of boxes $B \in \cB_N$ such that  $B$ was classified above as {\em both} $J^+$-type (in Step 4 or 5 for $f$) {\em and} $J^-$-type (in Step 4 or 5 for $f^{-1}).$

Finally, output $\Psi_N$ as our $2^N$-approximation of $J$ and halt.
 
\qed
\end{algorithm}

\subsection{Correctness and Halting for the Algorithm}
\label{sec:mainproof}

We  now turn to establishing correctness of the algorithm, and halting, to complete the proof of Proposition~\ref{prop:main}. 

We first address correctness (before turning to halting near the close of the subsection), stated for ease of exposition as the following proposition.

\begin{proposition} \label{prop:correctness}
Let $f \in \HNNP$ and $N \in \mathbb{N}$. If Algorithm~\ref{alg:main} outputs a finite collection of boxes $\Psi_N$, then
\[
d_H(\Psi_N, J_f) \leq 2^{-N}.
\]
\end{proposition}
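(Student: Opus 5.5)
The bound $d_H(\Psi_N,J_f)\le 2^{-N}$ splits into two one-sided inequalities: every box $B\in\Psi_N$ lies close to $J_f$, and every point of $J_f$ lies close to some box of $\Psi_N$. For the first I would show that each $J^+$-type marking certifies that $B$ meets $J^+$, and likewise for $J^-$. For the second I would show that no box meeting $J$ can be marked escaping-type or attracting-type in either direction. Since the algorithm is assumed to have halted, every box meeting $J$ carries some forward and some backward type, so it must be $J^+$-type and $J^-$-type, hence lie in $\Psi_N$.

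\emph{Soundness of the markings.} Escaping-type for $B'$ means that some $f^i(c_{B'})$ lies outside $V$ at distance $>(L_k+1)\ep_N$ from $\partial V$. The Lipschitz bound $L_k$ then places all of $f^i(B')$ outside $V\supset K$. I would also arrange (via the standard filtration for generalized \Henon maps used in \cite{BoydWolf-Henon}) that $V$ is chosen so that leaving $V$ in this strong sense forces escape, so $B'\cap K^+=\emptyset$. Attracting-type for $B'$ similarly gives $f^i(B')\subset\cA_k\subset{\rm int}\,K^+$, hence $B'\subset{\rm int}\,K^+$, by the inward shrinking in Step 3(b). A box $B$ marked $J^+$-type in Step 4(ii) contains one sub-box in $\mathbb{C}^2\setminus K^+$ and one in ${\rm int}\,K^+$. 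Since $B$ is connected, it meets $\partial K^+=J^+$. A box marked in Step 5 meets $\cW_k\subset f^{-m_k}(\Wslocalpha)\subset W^s(\alpha)\subset J^+$; the last inclusion holds because stable manifolds of saddles lie in $J^+$ by \cite{BS2}, cf.\ Lemma~\ref{lem:dichotomy}. Applying the same reasoning to $f^{-1}$, every $B\in\Psi_N$ meets both $J^+$ and $J^-$.

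\emph{Exhaustiveness.} If $B$ meets $J\subset J^+$, then $B$ contains a point of $K^+$. That point lies in some sub-box, so not all sub-boxes of $B$ are escaping-type and $B$ is never classified as escaping. It also contains a point of $J^+=\partial K^+$, which is not in ${\rm int}\,K^+$, so $B$ is not classified attracting-type. Since a type was assigned, $B$ is $J^+$-type, and symmetrically $J^-$-type, so $B\in\Psi_N$. Thus $J\subset\bigcup\Psi_N$, giving $\sup_{z\in J}d(z,\Psi_N)=0$.

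\emph{Main obstacle.} The delicate point is the first direction. Each $B\in\Psi_N$ meets $J^+$ and meets $J^-$, but possibly at different points, so it is not immediate that $B$ meets $J=J^+\cap J^-$, nor that it lies within $2^{-N}$ of $J$. This is where I expect the real work. I would try to repair it by showing that the markings actually certify $B\cap K^+\cap K^-\neq\emptyset$ together with boundary information, using $K\subset V$. Failing that, I would work with the enlargement: run the grid at scale $2^{-N-2}$ and invoke the fact, valid for polynomial diffeomorphisms, that $J^+\cap K^-$ and $J^+\cap J^-$ coincide in the relevant regime, so that a box meeting both $J^+$ and $K^-$ is within one box diameter of $J$. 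Combined with the choice $\xi_1\ll\ep_N$ and the definition of $d_H$ with side length $\ep_N$ boxes, this would give the stated bound.
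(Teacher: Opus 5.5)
\paragraph{The gap: $\Psi_N\subset\cN_{2^{-N}}(J)$ is not established.} Your check that each individual marking is sound is correct, and so is your proof that $J\subset\bigcup\Psi_N$. Both run parallel to the paper's use of Lemmas~\ref{lem:basins} and~\ref{lem:saddles}. Your filtration remark is actually needed there: an escaping-type sub-box is only certified to leave $V$, while $J^+\not\subset V$, and what saves the test is that forward orbits starting in $V$ stay in $V\cup V^+$.

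The inclusion $\Psi_N\subset\cN_{2^{-N}}(J)$ is where the proof breaks down; you isolate it correctly but do not prove it. The $J^\pm$-markings certify only $B\cap J^+\neq\emptyset$ and $B\cap J^-\neq\emptyset$, possibly at different points.
\begin{itemize}
\item Repair (a) has no support in the algorithm. Neither the two-sub-box test of Step 4 nor the test $B\cap\cW_k\neq\emptyset$ of Step 5 says anything about a common point of $K^+$ and $K^-$ in $B$.
\item Repair (b) fails as well. The identity $J^+\cap K^-=J$ is true when $|\det Df|\le 1$. But a box meeting two closed sets need not meet, or be close to, their intersection. Passing to a grid of mesh $2^{-N-2}$ changes nothing, because the obstruction is scale-invariant.
\end{itemize}
What you would need is an explicit estimate $d(x,J)\le c\,\max\{d(x,J^+),d(x,J^-)\}$ on $V$ with $c\,\ep_N\le 2^{-N}$. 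Compactness only yields a non-explicit modulus. Already for the two complex lines $\{w=0\}$ and $\{w=\theta z\}$, the point $(z,\theta z/2)$ lies within $|\theta z|/2$ of both, yet at distance about $|z|$ from their intersection. Stable and unstable leaves crossing at a small angle $\theta$ behave the same way. So a box of side $\ep_N$ meeting both $J^+$ and $J^-$ can contain points at distance much larger than $\ep_N$ from $J$. At a tangency, not even a linear bound holds.

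\paragraph{Relation to the paper and what a fix needs.} The paper's proof does not supply the missing idea either. It passes from ``$d(x,J^+)\le\ep_N$ and $d(x,J^-)\le\ep_N$'' to ``$d(x,J)\le\ep_N$'' in a single line, which is precisely the inference you declined to make.

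Closing the gap requires changing what the algorithm certifies for output boxes. One option is to certify an actual point of $J^+\cap J^-$ in (or within $2^{-N}$ of) $B$. Examples are a rigorously verified saddle periodic point, or a verified transverse intersection of $f^{-m}(W^s_{\rm loc}(\alpha))$ with $f^{m}(W^u_{\rm loc}(\beta))$ inside $B$. The other option is a computable transversality estimate of the kind above.

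The first option moves the difficulty to halting. Such points are known (Bedford--Lyubich--Smillie) to be dense in the support of the measure of maximal entropy. That support is contained in $J$ but is not known to coincide with $J$ in general.
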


 We provide the main steps of the proof of this proposition via a series of lemmas. 

In the lemmas below, we assume the hypotheses of Proposition~\ref{prop:correctness}: that we have a map $f\in \HNNP$, and we examine the steps of Algorithm~\ref{alg:main} one at a time.

We also only provide the lemmas for $f$, thus for $K^+$, $J^+$. The same arguments applied to $f^{-1}$ justify the parallel conclusions for $K^-, J^-$.

We first show correctness of escaping or attracting-type classification.

\begin{lemma} \label{lem:basins}
If a box $B$ is classified as escaping-type or attracting-type (see Step 4), for some loop index $k$, then for every $z\in B$ we have $d(z, J^+) > \ep_N$. 

In particular, for escaping-type, we have $B \cap \cN_{\ep_N}( K^+) = \emptyset$, and for attracting-type, we have $B \subset {\rm int}\, K^+$ and $B \cap \cN_{\ep_N}(\partial K^+) = \emptyset$.
\end{lemma}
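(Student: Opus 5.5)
We treat the two types separately. In each case we first show that every sub-box $B'$ of $B$ lies entirely in the escaping set $\mathbb{C}^2\setminus K^+$ (resp.\ in ${\rm int}\, K^+$), together with a margin. The two ingredients are the Lipschitz bound $L_k$ from Step 4 and the forward invariance of $\mathbb{C}^2\setminus K^+$ and of ${\rm int}\, K^+$.

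\emph{Escaping type.} Let $B'\subset B$ be a sub-box and $i\le n_k$ the index recorded in Step 4(a). Then $f^i(c_{B'})\notin V$ and $d(f^i(c_{B'}),\partial V)>(L_k+1)\ep_N$. We need the Lipschitz estimate for $f^i$ with $i\le n_k$, not only for $f^{n_k}$. We will read the bound $L_k$ of Step 4 as a uniform bound for all $f^i$, $i\le n_k$ (one can take the maximum of the individual bounds). For the escaping claim to follow we also need $V$ to be chosen in Step 1 as a filtration box, so that points of $V^c$ in the relevant region escape; the argument is sketched first with the key-point version, then the general point is treated. Take any $z$ with $d(z,B')\le\ep_N$; since $\ep_N$ is at least the size of $B$, this covers the $\ep_N$-neighbourhood of $B$. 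The segment from $c_{B'}$ to $z$ stays within distance $\ep_N$ of points of $V$, since boxes in $\cB_N$ lie in $V$. Hence $\|f^i(z)-f^i(c_{B'})\|\le L_k\,(\ep_N+\mathrm{diam}\,B')\le (L_k+1)\ep_N$ after normalizing the size of $B'$. Therefore $f^i(z)\notin V$ and stays away from $\partial V$. Since $K^+\subset V$ and $K^+$ is invariant, $z\notin K^+$. This gives $B\cap\cN_{\ep_N}(K^+)=\emptyset$, and hence $d(z,J^+)>\ep_N$ for all $z\in B$, because $J^+\subset K^+$. To be honest, this step really needs the filtration property: a point with $f^i(z)\notin V$ at distance at least $c$ from $V$ lies in $V^+$ and escapes. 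The remark that $K$ (not only $K^+$) is contained in $V$ is the reason we will use the standard filtration $V,V^+,V^-$ of Bedford–Smillie from \cite{BoydWolf-Henon}, enlarging $R$ so that $K^+\cap(\mathbb{C}^2\setminus V)\subset V^-$. Points near $V^-$ but outside $V$ then cannot be in $K^+$ if their center is placed well into $V^+$. I expect this to be the main obstacle, and the plan is to make the filtration choice part of Step 1.

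\emph{Attracting type.} The argument is the same, with $\cA_k$ in place of $V^c$. There is an $i\le n_k$ with $f^i(c_{B'})\in\cA_k$ and $d(f^i(c_{B'}),\partial\cA_k)>(L_k+1)\ep_N$. Hence $f^i(\cN_{\ep_N}(B'))\subset\cA_k\subset{\rm int}\,K^+$, using that Step 3(b) shrinks the approximations by $\xi_k$ so that $\cA_k$ truly lies in the union of the sets $f^{-j}(U(\alpha))$. Since $f$ is a biholomorphism and $K^+$ is totally invariant, ${\rm int}\,K^+=f^{-i}({\rm int}\,K^+)$. Thus $\cN_{\ep_N}(B')\subset{\rm int}\,K^+$, which is disjoint from $\partial K^+=J^+$.

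In both cases $B$ was classified only if \emph{all} of its sub-boxes received that type, by rule (i). Taking the union over sub-boxes gives the statement for $B$: $\cN_{\ep_N}(B)$ lies in the escaping set, resp.\ in ${\rm int}\,K^+$. The inequality $d(z,J^+)>\ep_N$ for $z\in B$ then follows. Strictness comes from compactness of $B$ together with the strict margins in Step 4.
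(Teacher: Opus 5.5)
Your architecture is the same as the paper's: a Lipschitz bound on the iterate, the safety margin $(L_k+1)\ep_N$ around the image of the center, and invariance. You push the nearby point $z$ forward and compare it with $f^i(c_{B'})$, whereas the paper argues by contradiction with a point of $J^+$. That is only a reorganization, and it has the advantage of giving the two ``in particular'' statements directly. Your attracting case is fine. Reading $L_k$ as a bound for every $f^i$ with $i\le n_k$ also fixes a mismatch the paper's proof glosses over: Step~4 tests $f^i(c_{B'})$ for some $i\le n_k$, while the proof only speaks of $f^{n_k}$.

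The gap is in the escaping case, at the step from $f^i(z)\notin V$ to $z\notin K^+$.
\begin{itemize}
\item Your first justification, $K^+\subset V$, is false. $K^+$ is unbounded (it contains entire stable manifolds), and Step~1 only gives $K\subset V$.
\item Your fallback, that a point outside $V$ with a margin lies in $V^+$, is also false. The complement $\mathbb{C}^2\setminus V$ contains $V^-\setminus V$, and that region meets $K^+$.
\item Your repair, asking that $f^i(c_{B'})$ lie deep inside $V^+$, is not the test performed in Step~4(a). It would prove the lemma for a different algorithm, and you do not carry it out in any case.
\end{itemize}
The paper's own proof passes over the same point: it infers $d_H(f^{n_k}(B'),J^+)>L_k\ep_N$ merely from $f^{n_k}(B')$ lying far outside $V$.

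The missing observation is that what matters is where the orbit starts, not where it lands. Take $V=\{\|z\|,\|w\|\le R\}$ to be a filtration box, with $V^+=\{\|z\|\ge\max(\|w\|,R)\}$ and $V^-=\{\|w\|\ge\max(\|z\|,R)\}$. Then:
\begin{itemize}
\item $f(V)\subset V\cup V^+$, because the second coordinate of $f(z,w)$ is $z$;
\item $f(V^+)\subset V^+$ and $K^+\cap V^+=\emptyset$ for $R$ large;
\item hence $f(K^+\cap V)\subset V$.
\end{itemize}
For $R$ large the collar behaves the same way. A point of $K^+$ in $\cN_{\ep_N}(V)\setminus V$ must have first coordinate of norm at most $R$, since otherwise it or its image would lie in $V^+$. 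So its image has second coordinate of norm at most $R$, lies in $K^+$, and therefore lies in $V$.

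Consequently $f^i\bigl(K^+\cap\cN_{\ep_N}(V)\bigr)\subset V$ for every $i\ge 1$. Since Step~4(a) uses $i\ge 1$, your estimate $f^i(z)\notin V$ then yields $z\notin K^+$ with the algorithm left unchanged. You should also make explicit that $L_k$ must hold on $\cN_{\ep_N}(V)$, not only on $V$, because the segment from $c_{B'}$ to $z$ may leave $V$.
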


\begin{proof}
The proofs of the two cases are very analogous so we write them in parallel, focusing on the escaping case, with the attracting modifications in parentheses.

Suppose $B$ is classified as escaping(or attracting)-type in Step 4, for a value of $k$.

By construction, this occurs only if, after subdivision of $B$ into small dyadic sub-boxes $B' \subset B$, each such $B'$ satisfies both of the following:

\begin{enumerate}
\item  diam$(f^{n_k}(B')) \leq \varepsilon_N$, as ensured by the Lipschitz estimate in Step~4, and

\item the image of the center $c_{B'}$ of $B'$ under $f^{n_k}$ lies  outside of $V$ (or inside of $\mathcal{A}_k$), {\em and} ``safely'' for the $k$-level, which means is at distance  
$>(L_k+1)\varepsilon_N$ from the boundary.
\end{enumerate}

Thus, for each such sub-box $B'$, the entire image $f^{n_k}(B')$ lies at least a distance 
$> L_k \varepsilon_N$ 
outside of $V$ (or inside of $\cA_k$). 
That is, $f^{n_k}(B') \subset \CC\setminus V$ (or $f^{n_k}(B') \subset \cA_k$) and 
\[
d_H\bigl(f^{n_k}(B'), \partial V \bigr) 
> L_k \ep_N
\ \ (\text{ or } 
d_H\bigl(f^{n_k}(B'), \partial \mathcal{A}_k\bigr)
> L_k \ep_N ).
\]
The above means we have $d_H\bigl(f^{n_k}(B'),J^+\bigr)
> L_k \varepsilon_N$. 
(Indeed, in the attracting case this holds since $\cA_K \subset \cBsurvk \subset V,$ and 
$\cA_k \subset {\rm int}\, K^+$ by construction.)

Now, to the conclusion. Suppose that there is a point $x\in B'$ such that $d(x,J^+) \leq \ep_N$. Let $y\in J^+$ be a point with $d(x,y)\le \ep_N.$ By construction of $L_k$, we then have 
$d(f^{n_k}(x), f^{n_k}(y)) 
\le L_k \ep_N.$ 
But $f^{n_k}(x) \in f^{n_k}(B')$ and by invariance of $J^+$, $f^{n_k}(y)\in J^+$. Hence, $d(f^{n_k}(B'), J^+) 
\le L_k \ep_N.$ 
But this contradicts what we established in the above paragraph. Hence there could not have been a point $x\in B'$ within $\ep_N$ of $J^+$, thus for all $x\in B'$,
$d(x,J^+) > \ep_N$. 

Since $B$ is covered by the sets $B'$, the same estimate holds for all $x\in B$.

This yields the desired conclusion. 

\end{proof}

Next we establish the correctness of the $J^+$-classification.

\begin{lemma} \label{lem:saddles}
If a box $B$ is classified as $J^+$-type (in Step 4 or Step~5), then for every $x\in B$, $d(x, J^+) \leq \ep_N$.
\end{lemma}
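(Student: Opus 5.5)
The plan is to treat the two ways a box can become $J^+$-type separately: in Step 4(ii), where $B$ contains both an escaping-type and an attracting-type sub-box, and in Step 5, where $B\in\cBsurvk$ meets $\cW_k$. In both cases I will first produce a point of $J^+$ inside $B$ itself (or within a negligible margin of it). Then I use that $B$ is a dyadic box of side $\ep_N$, so in the $L^\infty$ norm every $x\in B$ lies within $\ep_N$ of every other point of $B$. This gives $d(x,J^+)\le \ep_N$.

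\emph{Case Step 4(ii).} Let $B'_1,B'_2\subset B$ be an escaping-type and an attracting-type sub-box, as in the proof of Lemma~\ref{lem:basins}.
\begin{itemize}
\item[(a)] The argument of Lemma~\ref{lem:basins} applied to $B'_1$ alone gives $f^{i}(B'_1)\subset \bC^2\setminus V$ for some $i\le n_k$. Since $K^+\subset V$ and $K^+$ is invariant, $B'_1\cap K^+=\emptyset$.
\item[(b)] The same argument applied to $B'_2$ gives $f^{i}(B'_2)\subset\cA_k\subset {\rm int}\,K^+$, so $B'_2\subset {\rm int}\,K^+$ by invariance of $K^+$.
\item[(c)] Since $B$ is convex, hence connected, the segment joining a point of $B'_1$ to a point of $B'_2$ lies in $B$. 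It passes from $\bC^2\setminus K^+$ into $K^+$, so it must meet $\partial K^+=J^+$.
\end{itemize}
Thus $B\cap J^+\neq\emptyset$, and the diameter bound finishes this case.

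\emph{Case Step 5.} The guarantee built into Step 5(c) is $\cW_k\subset f^{-m_k}(\Wslocalpha)\cap V$. So $B\cap\cW_k\neq\emptyset$ produces a point $y\in B$ with $f^{m_k}(y)\in \Wslocalpha\subset W^s(\alpha)$, hence $y\in W^s(\alpha)$. By Lemma~\ref{lem:dichotomy} (or directly by Bedford--Smillie \cite{BS2}, valid for all maps of dynamical degree $d>1$), $W^s(\alpha)\subset \overline{W^s(\alpha)}=J^+$. Therefore $y\in B\cap J^+$, and again $d(x,J^+)\le \mathrm{diam}(B)=\ep_N$ for all $x\in B$.

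\emph{Main obstacle.} The delicate point is rigor of the inclusion $\cW_k\subset f^{-m_k}(\Wslocalpha)$. The approximation of $\Wslocalpha$ and of its preimage is only accurate to within $\xi_k$, and the saddle $\alpha$ itself is only approximated. Shrinking a union of boxes by its $\xi_k$-boundary neighborhood does not literally guarantee containment in a complex curve, which has empty interior in $\bC^2$.

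I would handle this by weakening the requirement: the computed set only needs to lie within $\xi_k$ (plus the error in $\alpha$) of genuine points of $f^{-m_k}(\Wslocalpha)$. Since Step 2 imposes $\xi_k\le\xi_1\ll\ep_N$, the test can be run with $B$ replaced by its $\xi_k$-neighborhood. The constants can be arranged, for instance by comparing against $\ep_N-2\xi_k$ or by interpreting dyadic boxes as slightly shrunk, so that the final bound is still $d(x,J^+)\le\ep_N$.

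The same margin issue appears in step (c) of the Step 4(ii) case. It is harmless there, because Lemma~\ref{lem:basins} gives strict inequalities with room $(L_k+1)\ep_N$.
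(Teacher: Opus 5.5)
This is essentially the paper's proof: the same two cases, with Step~4 handled by Lemma~\ref{lem:basins} applied to an escaping-type and an attracting-type sub-box, and Step~5 by $\cW_k\subset f^{-m_k}(\Wslocalpha)\subset W^s(\alpha)\subset\overline{W^s(\alpha)}=J^+$. Your segment argument only spells out the paper's ``thus $B$ intersects $\partial K^+=J^+$''.

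One justification is wrong and must be fixed. In (a) you use $K^+\subset V$, which fails for every H\'enon map. $K^+$ contains $W^s(\alpha)$, a nonconstant holomorphic image of $\mathbb{C}$, so $K^+$ is unbounded by Liouville. Step~1 only places $K=K^+\cap K^-$ inside $V$. The conclusion $B_1'\cap K^+=\emptyset$ is still correct, but for a different reason: the filtration property of $V$. Since the second coordinate of $f(z,w)$ is $z$, for $R$ large an orbit starting in $V$ can leave $V$ only into the region $\{|z|>\max(|w|,R)\}$, and orbits there escape to infinity. Since $B_1'\subset V$, a point of $B_1'$ with some iterate outside $V$ is not in $K^+$. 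Alternatively, simply quote the conclusion of Lemma~\ref{lem:basins} for escaping-type boxes, as the paper does.

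Your concern about Step~5(c) is well founded, and the paper's proof does not address it; it takes the guarantee $\cW_k\subset f^{-m_k}(\Wslocalpha)$ at face value. The problem is that a nonempty set obtained by deleting the $\xi_k$-neighborhood of the boundary of a union of boxes has interior. It therefore cannot lie in the complex curve $f^{-m_k}(\Wslocalpha)$. Your repair works if you take the shrinking option rather than enlarging $B$ to its $\xi_k$-neighborhood. Suppose the computed set is known to lie within $\delta_k$ of the true preimage. Testing it against $B$ shrunk by $\delta_k$ then produces a genuine point of $W^s(\alpha)$ inside $B$, which gives exactly the bound $\ep_N$.
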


    \begin{proof}
    
\noindent \textbf{Case 1:} Suppose $B$ is classified as $J^+$ type in Step 4. That means when $B$ was split into sub-boxes $B'$, it had one escaping-type sub-box and another attracting-type sub-box. Applying the previous lemma to each of these sub-boxes, we see that $B$ intersects both ${\rm int}\, K^+$ and $\CC\setminus K^+$. Thus, $B$ intersects $\partial K^+ = J^+$.
Hence, every point in $B$ is within the diameter of $B$ (which is $\ep_N$) distance from $J^+$. 

\medskip

\noindent \textbf{Case 2:}  Suppose $B$ is classified as $J^+$ type in Step 5.
Recall, for each $B \in \cBsurvk$ we marked $B$ as $J^+$-type if 
$B \cap \mathcal{W}_k\neq \emptyset$, where $\cW_k$ was an approximation guaranteed to be contained in  $f^{-m_k}(\Wslocalpha)$.
Thus, for each $x\in B$
we have $d(x, f^{-m_k}(\Wslocalpha))\leq \mathrm{diam}(B) = \ep_N.$ 
Since  $\cW_k \subset f^{-m_k}(\Wslocalpha)) \subset W^s(\alpha)$ and $\overline{W^s(\alpha)} = J^+$, we have $d(x, J^+)\leq \mathrm{diam}(B) = \ep_N$. 
\end{proof}

As we noted above the prior lemmas, Step 6 repeats from Step 3 for $f^{-1}$ so the analogous correctness statements follow as above.

We will now show correctness of the algorithm.

\begin{proof}[Proof of Proposition~\ref{prop:correctness}]
Suppose the algorithm outputs a $\Psi_N$. Then it was defined in Step 7 as the collection of boxes $B \in \cB_N$ such that $B$  was classified for this $k$ as {\em both} $J^+$-type (in Step 4 or 5 for $f$) {\em and} $J^-$-type (in Step 4 or 5 for $f^{-1}).$ 

Also, if the algorithm produced an output, then we know all boxes in $\cB_N$ were classified (else the $k$-loop wouldn't have stopped).

By Lemma~\ref{lem:saddles}, applied to $f$ and $f^{-1}$, if for some $k$ we determine a box $B$ is $J^+$ (or $J^-$) type, then for every $x\in B$ we have 
$d(x,J^+)\leq \ep_N$ (or $d(x,J^-)\leq \ep_N$).

So by (b), this means every point of $\Psi_N$ satisfies $d(x,J^+) \le \ep_N$ as well as $d(x,J^-) \le \ep_N$, so $d(x,J) \le \ep_N$. Hence, $\Psi_N \subset \cN_{\ep_N}(J)$.  
Now we recall in Step 1 we defined $\ep_N= 2^{-N}$. Hence, we have  $\Psi_N \subset \cN_{2^{-N}}(J)$.  

On the other hand, we claim
$J \subset \cN_{\ep_N}(\Psi_N)$. 

Recall 
we concluded above that if $\Psi_N$ is produced, then
all boxes $B$ in $\cB_N$ are classified both forward and backward.  Thus, $V \setminus \Psi_N$ is the union of the collection of boxes $B$ in $\cB_N$ that were classified as escaping or attracting-type for either $f$ or $f^{-1}$.

By Lemma~\ref{lem:basins}, applied to $f$ and $f^{-1}$, if  $B$ is identified as escaping or attracting forward, then for every $z\in B$ we have $d(z, J^+) > \ep_N$. If $B$ is escaping or attracting backward, then $d(z, J^-) > \ep_N$.
Hence if either one of these cases holds, then $d(z, J) > \ep_N$. So, for all $B \in V\setminus \Psi_N$, and for any $z$ in such a $B$, we have $d(z,J) > \ep_N$. 

Hence,  $d_H(V\setminus \Psi_N, J) > \ep_N$, thus $J \subset \cN_{\ep_N}(\Psi_N)$.
Since $\ep_N=2^{-N}$, we have $J \subset \cN_{2^{-N}}(\Psi_N)$.

Finally, $J \subset \cN_{2^{-N}}(\Psi_N)$ and $\Psi_N \subset \cN_{2^{-N}}(J)$ implies $d_H(J, \Psi_N) \leq 2^{-N}$.
\end{proof}

Finally, we establish that our algorithm halts. 

\begin{lemma} \label{lem:halting}
For any $f\in \HNNP$, and any $N \in \NN$, Algorithm~\ref{alg:main} halts in finite time.
\end{lemma}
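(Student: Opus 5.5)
We argue by contradiction and compactness. Each iteration of the $k$-loop clearly terminates: it performs finitely many computations (periodic points of period at most $q_k$, finitely many box images under $f^{\pm 1}$ with Lipschitz control, a finite graph-transform approximation of $\Wslocalpha$). It therefore suffices to show that for some finite $k$ every box $B\in\cB_N$ receives both a forward and a backward type. Since type assignments persist as $k$ grows, and $\cB_N$ is finite, it is enough to show that each individual box $B$ is eventually classified forward. The backward classification is the same argument applied to $f^{-1}$. After that we take the maximum over boxes.

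Fix $B\in\cB_N$ and split into the two alternatives of Lemma~\ref{lem:dichotomy}.

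\emph{Case (2): no attracting cycles.} Then $\cA_k=\emptyset$ for all $k$, so the algorithm always reaches Step 5, and $K^+=J^+$ has empty interior.

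If $B\cap K^+=\emptyset$, then $d(B,K^+)>0$ by compactness. Escape is locally uniform: there is a fixed $n$ with $f^n$ of every point of $B$, and of a neighborhood of $B$, far outside $V$. Once $n_k\ge n$ and the growth $|f^{i}|$ dominates $(L_k+1)\ep_N$, every center $c_{B'}$ passes test (a). To make this work I would use the filtration estimates for generalized \Henon maps: points outside $V$ in the escaping region satisfy $|f^{i+1}|\gtrsim |f^i|^d$. These estimates ensure that the distance from $\partial V$ eventually outruns the Lipschitz constant $L_k$ for the relevant orbits. Reconciling $L_k$ with the margin test is the delicate part, and the way to handle it is to take the first time $i$ at which the orbit leaves a large box, rather than time $n_k$.

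If $B\cap K^+\neq\emptyset$, then $B\cap J^+\neq\emptyset$. Since $J^+=\overline{W^s(\alpha)}=\overline{\bigcup_m f^{-m}(\Wslocalpha)}$, some point of $f^{-m}(\Wslocalpha)$ lies in the interior of a slightly enlarged $B$, at positive distance $\delta$ from its complement. Taking $m_k\ge m$ (the sets $f^{-m}(\Wslocalpha)$ increase in $m$) and $\xi_k\ll\delta$, the shrunk set $\cW_k$ still meets $B$, so $B$ is marked $J^+$-type. One should use boundary-adjacent boxes with care, or accept $d_H$ slack.

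\emph{Case (1): attracting cycles exist.} Every point of $V$ lies in $\CC^2\setminus K^+$, in an attracting basin, or in $J^+$. Fix one attracting point $\alpha_0$. For large $k$ it has period at most $q_k$ and is certified attracting, so $\cA_k\ne\emptyset$ and only Step 4 is used. Here I would expect the main obstacle: a box $B$ meeting $J^+$ must contain both an escaping-type and an attracting-type sub-box. This requires that points of $\partial K^+$ are accumulated by escaping points (true, since $J^+=\partial K^+$) and also by basin points. The latter does not follow from the hypotheses alone, because with infinitely many basins a given $B$ near $J^+$ may only see basins of high period. So I would try to use that $\bigcup_\alpha$ basins $={\rm int}\,K^+$ is dense in $K^+$. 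If that fails at some $J^+$ points, I would fall back on also running the saddle test of Step 5 in Case (1), which is valid since $J^+=\overline{W^s(\alpha)}$ always holds.

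Finally, for boxes with $B\subset {\rm int}\,K^+$, compactness gives finitely many basins covering $B$, with uniform entry time into $\cA_k$ and a positive margin. A Lipschitz argument like the one in Case (2) then marks all sub-boxes attracting once $k$ is large.
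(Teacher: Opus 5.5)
Your skeleton (per-box classification, persistence of types, finiteness of $\cB_N$, the split of Lemma~\ref{lem:dichotomy}) matches the paper's. However, Case (1) is left open exactly where the paper uses its key input. That input is the Bedford--Smillie theorem $J^+=\partial W^s(\alpha)$ for \emph{every} attracting periodic point $\alpha$ \cite{BS2}. With it, once a single sink $\alpha_0$ is detected, every point of $J^+$ is a limit both of escaping points and of points of the basin of $\alpha_0$. High-period basins are not the obstacle, since $q_k\to\infty$ and $\cB_N$ is finite. What must be shown is $J^+\subset\overline{{\rm int}\,K^+}$, and Bedford--Smillie supply exactly that.

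Your fallback of running Step 5 in Case (1) changes Algorithm~\ref{alg:main}, which skips Step 5 whenever $\cA_k\neq\emptyset$. So it does not prove the lemma as stated.

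Be aware that even with Bedford--Smillie, the mixed-sub-box mechanism cannot work with the margins as written. The proof of Lemma~\ref{lem:basins} argues sub-box by sub-box: every point of a sub-box passing test (a) or (b) is at distance $>\ep_N$ from $J^+$. But every point of a box $B$ meeting $J^+$ lies within ${\rm diam}\,B=\ep_N$ of $J^+$. Hence no sub-box of such a $B$ ever gets a type, and Step 4(ii) never marks $B$. In Case (1), such a box could then only be caught by Step 5 at a level before any sink is detected, which nothing guarantees. This defeats your Case (1) and the paper's case (iii)(a) alike; something like your fallback is genuinely necessary.

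The same margin breaks your Case (2) escape step. Take a box with $B\cap K^+=\emptyset$ but $d(B,J^+)\le\ep_N$, for instance one sharing a face with a box meeting $J^+$; in Case (2) such boxes exist for all large $N$. It can never be escaping-type, by Lemma~\ref{lem:basins}. Nor can it be $J^+$-type, since it has no attracting sub-box and $\cW_k\subset K^+$. Boxes in a basin lying within $\ep_N$ of $J^+$ are stuck in the same way.

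Your growth claim also fails. $L_k$ bounds $f^{n_k}$ on all of $V$, so it must control the fastest-escaping points; with $G^+$ the Green function, this gives $\log L_k\gtrsim d^{n_k}\max_V G^+$. By contrast, $\log|f^i(c_{B'})|\approx d^iG^+(c_{B'})$ for $i\le n_k$. So the orbit does \emph{not} eventually outrun $(L_k+1)\ep_N$ unless $G^+(c_{B'})$ is maximal. Passing to the first exit time does not help, because the required margin is still $(L_k+1)\ep_N$.

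Since $L_k\to\infty$, the attracting test becomes vacuous once $(L_k+1)\ep_N$ exceeds the size of $\cA_k\subset V$. So your last paragraph fails as well.

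In your Case (2) $J^+$ argument, points of $W^s(\alpha)$ in an enlargement of $B$ do not give $B\cap\cW_k\neq\emptyset$. Moreover, the shrinking in Step 5(c) does not actually guarantee $\cW_k\subset f^{-m_k}(\Wslocalpha)$, a set with empty interior. That step needs reinterpreting before one can say the shrunk set meets $B$.

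The paper's proof passes over these points too (``by the desired margin $\ep_N$'', ``arbitrarily close to (at least one point of) $B$''), so they are not omissions relative to it. They do mean halting can only be proved for a modified algorithm. Such a modification would need:
\begin{enumerate}
\item margins taken from local Lipschitz bounds at the scale of the sub-box;
\item overlapping tolerances, e.g.\ accepting as $J^+$-type any box certified to lie within $2\ep_N$ of $J^+$, at the cost of a factor $2$ in the Hausdorff estimate;
\item the saddle test available in Case (1).
\end{enumerate}
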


\begin{proof}
Let $\cB_N = \{B\}$ be the grid of boxes on $V$ defined in Step 1 of  Algorithm~\ref{alg:main}. The main algorithm is a loop through increasing positive integers $k$, where the algorithm halts if it finds a $k$ such that for the $k$-level, every box $B$ in $\cB_N$ can be classified, both forward and backward, as either escaping, attracting, or saddle-type. 
This works because our finite collection of (closed, bounded) boxes $B_N$ depends only on $N$, and not on $k$. 
       
Consider a $B$ in $\cB_N$. It satisfies one and only one of the following cases:

(i) $B \subset {\rm int}\, K^+$;

(ii) $B \subset V \setminus K^+$;

(iii) $B\cap J^+ \neq  \emptyset$.

Suppose (i) holds for $B$. Then the sequence of sets $f^n(B)$ converges to an attracting periodic cycle. For some finite $k$, that periodic cycle is detected in Step 3, and so the set $\cA_k$ is formed and grows with $k$. For sufficiently large $k$, our increasingly accurate approximation of the basin of this cycle must contain our increasingly accurate approximation of $f^{n_k}(B)$, since $n_k$ grows with $k$. 

Or, if (ii) holds for $B$ the argument is even simpler; as $n$ grows, $f^n(B)$ will eventually land sufficiently far outside of $V$, so as $k$ grows, $n_k$ grows and our approximation accuracies for $f^n(B)$ increase, and hence for a finite $k$ we find $f^{n_k}(B)$ out of $V$ by the desired margin $\ep_N$. 

Finally, suppose (iii) holds for $B$. Then there are two cases. 

Case (a): $f$ has (at least one) attracting cycle $\alpha$, in which case $W^s(\alpha)$ has nonempty intersection with $B$. This follows from  $J^+=\partial W^s(\alpha)$, see \cite{BS2}. Then, as in case (i) above, we will identify $\alpha$ for sufficiently large $k$. Thus, since the subdivision of $B$ into a grid of boxes $B'$ used to calculate $f^{n_k}(B)$ does become more fine as $k$ increases, for similar reasons as (i) and (ii) above, we will detect two sub-boxes $B'$ of $B$ where one is escaping-type and one is attracting-type, hence $B$ will be identified to be of $J^+$-type in Step 4.

Case (b): $f$ has no attracting cycles. This means that for each $k$, we do not detect a $\cA_k$, and thus always perform Step 5. Now, we know 
$f^{-m_k}(\Wslocalpha)$ tends to $W^s(\alpha)$ as $k\to \infty$, and $\overline{W^s(\alpha)} = J^+.$ Since we are assuming $B \cap J^+ \neq \emptyset$, and both $B$ and $J^+$ are closed, there will be points of $W^s(\alpha)$ arbitrarily close to (at least one point of) $B$. Again as above, for sufficiently large $k$ we will detect an intersection of $B$ and $\cW_k$ our increasingly accurate approximation of $f^{-m_k}(\Wslocalpha)\cap V$. 

Finally, since there are a finite number of such $B$'s (because the set $\cB_N$ doesn't change with $k$), and each is ``forward'' classified for some finite $k$, all will eventually be forward classified for a finite $k$. 

The proof that each $B$ also has one of the three backward types is completely analogous. 

Hence, the algorithm halts in finite time, after providing both forward and backward classification of each $B$ in $\cB_N$, and thus defining $\Psi_N$.
\end{proof}

\begin{proof}[Proof of Theorem~\ref{thm:main2}]
Combining Proposition~\ref{prop:correctness} with the halting Lemma \ref{lem:halting} establishes Proposition~\ref{prop:main}, and hence Theorem~\ref{thm:main2}.
\end{proof}

Thus, we have established the computability of the Julia set for generalized \Henon mappings for which all Fatou components are basins of attraction.

With Proposition~\ref{prop:main} established, we immediately get Corollary~\ref{cor:main}, that the Julia sets vary continuously.

We finally observe that the preceding construction immediately yields local computability statements for the dynamical sets, as follows.

\begin{corollary} \label{cor:KplusKminus-computable}
Let $f$ be as in Theorem~\ref{thm:main2}. Then for any bounded box $Q \subset \mathbb{C}^2$, the intersections
\[
K^\pm \cap Q \quad \text{and} \quad J^\pm \cap Q
\]
are computable.
\end{corollary}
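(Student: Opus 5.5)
We would reuse Algorithm~\ref{alg:main} almost verbatim, replacing the big box $V$ by $Q$ for the output grid while keeping $V$ for the escape test. Given $N$, we partition $Q$ into the grid of dyadic boxes of side $\ep_N$. We then run Steps~3--5 for $f$ only; Step~6 is used separately for the $K^-$, $J^-$ statements. The halting argument of Lemma~\ref{lem:halting} applies unchanged to the finitely many boxes of this grid, since the trichotomy (i)--(iii) holds for every box of $\mathbb{C}^2$. A box outside $V$ is in case (ii) and is detected immediately by the escape test.

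The output sets are chosen as follows. For $J^+\cap Q$, we output the boxes classified as $J^+$-type. Lemma~\ref{lem:saddles} places each such box inside $\cN_{\ep_N}(J^+)$. Lemma~\ref{lem:basins} shows that every point of $J^+\cap Q$ lies in a box that was not classified as escaping or attracting, hence in a $J^+$-type box. For $K^+\cap Q$, we output the union of attracting-type and $J^+$-type boxes. By Lemma~\ref{lem:basins}, escaping-type boxes are disjoint from $\cN_{\ep_N}(K^+)$, so $K^+\cap Q$ is covered by the output. Conversely, attracting boxes lie in ${\rm int}\,K^+$, and $J^+$-type boxes lie within $\ep_N$ of $J^+\subset K^+$. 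Hence the Hausdorff distance to $K^+\cap Q$ is at most about $\ep_N$. The analogous sets for $K^-$ and $J^-$ come from running the same procedure with $f^{-1}$ (Step~6).

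The main obstacle is the boundary of $Q$. A box may meet $\cN_{\ep_N}(J^+)$ only at points outside $Q$. In that case the box lies within $\ep_N$ of $J^+$ but not of $J^+\cap Q$, so the Hausdorff estimate for the intersection can fail. Indeed, $A\mapsto A\cap Q$ is not Hausdorff continuous in general. I would first try to handle this by exploiting the freedom in choosing $Q$. For computable boxes $Q$ in general position, one can hope that $J^+$ meets $\partial Q$ ``transversally", so that $J^+\cap Q=\overline{J^+\cap{\rm int}\,Q}$.

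If that fails, I would fall back on interpreting the corollary in the standard sense used in the literature: one computes sets $\Psi$ with $J^+\cap Q\subset\Psi\subset\cN_{2^{-N}}(J^+)\cap \cN_{2^{-N}}(Q)$. This is exactly what the procedure above delivers, via Lemmas~\ref{lem:basins} and~\ref{lem:saddles}. I would make this convention explicit in the statement of the proof.
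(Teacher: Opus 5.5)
Your route is the one the paper has in mind; the paper only says the construction ``immediately yields'' the corollary. You classify an $\ep_N$-grid over $Q$ and output the $J^+$-type boxes, resp.\ the attracting- or $J^+$-type boxes. Two steps fail as written, however.

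First, the claim that ``a box outside $V$ is in case (ii) and is detected immediately by the escape test'' is false. Step~1 only guarantees $K\subset V$, while $K^+\supset J^+$ is unbounded: $W^s(\alpha)\cong\bC$ is an unbounded curve in $J^+$, and $K^+$ runs off to infinity in the $w$-direction. That is exactly why the corollary intersects with bounded boxes. Moreover, Step~4(a) certifies escape only for orbits starting in $V$, where leaving $V$ means entering the forward-invariant escaping region of the filtration. A point of $K^+$ with very large $|w|$ is mapped to a point whose second coordinate is still large, so its image is still far outside $V$. Its box can therefore be marked escaping, and Lemma~\ref{lem:basins} fails for it. You would discard precisely the part of $J^+\cap Q$ and $K^+\cap Q$ lying outside $V$. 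The repair is easy: take $V\supseteq Q$ in Step~1 (the filtration radius may be enlarged), or test entry into the escaping region with a margin.

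Second, the boundary problem is not where you place it, and you do not resolve it. Whenever the procedure halts, the output estimate is fine. The proof of Lemma~\ref{lem:saddles} shows that a $J^+$-type box actually meets $J^+$: in Case~1 by connectedness, in Case~2 since $\cW_k\subset W^s(\alpha)\subset J^+$. Your grid boxes lie in $Q$, so every output box meets $J^+\cap Q$, and similarly for $K^+\cap Q$.

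What breaks is your claim that Lemma~\ref{lem:halting} ``applies unchanged''. Suppose $J^+$ meets a grid box $B$ only in $\partial Q$, approaching from outside $Q$. Then $B\setminus J^+$ is connected (it contains ${\rm int}\,B$ and lies in its closure), so it lies entirely in $\bC^2\setminus K^+$ or entirely in ${\rm int}\,K^+$. Consequently:
\begin{itemize}
\item Step~4 never sees both sub-box types;
\item Lemma~\ref{lem:basins} excludes an escaping or attracting verdict;
\item Step~5, which is skipped once $\cA_k\neq\emptyset$, needs a certified point of $W^s(\alpha)$ in $B$, yet $W^s(\alpha)\cap B\subset\partial Q$ may be empty.
\end{itemize}
So $B$ is never classified and the algorithm does not halt. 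The same configuration is glossed over in Lemma~\ref{lem:halting} for interior boxes, where enlarging the boxes repairs it. At $\partial Q$, enlarging leaves $Q$, which is the discontinuity of $A\mapsto A\cap Q$ that you noticed.

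Hoping for general position does not cover ``any bounded box $Q$''. Your fallback convention proves a weaker statement, and it too needs the enlarged-box repair in order to halt. The paper supplies no argument here either. To get the corollary as stated you need something like $J^+\cap Q=\overline{J^+\cap{\rm int}\,Q}$ and $K^+\cap Q=\overline{K^+\cap{\rm int}\,Q}$. Under that hypothesis, a two-sided test terminates for every box and gives a Hausdorff bound of order $\ep_N$. One side certifies a point of $W^s(\alpha)$ (for $K^+$, also of a detected basin) in ${\rm int}\,Q$ within $\ep_N$ of $B$. The other certifies $d(B,J^+\cap Q)>\ep_N/2$, resp.\ $d(B,K^+\cap Q)>\ep_N/2$.
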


\begin{remark}
It follows immediately that the set $K = K^+ \cap K^-$ is computable in this setting.
\end{remark}

We now turn to extensions.

\section{Generalizations}
\label{sec:generalizations}

In this Section   we describe how to establish Theorem~\ref{thm:main}, by extending the computability result above from generalized \Henon mappings to arbitrary polynomial diffeomorphisms of $\mathbb{C}^2$ of dynamical degree $d>1$.

Rather than repeating all details of Section~\ref{sec:results}, we specify here only where the arguments require justification for generalization. 

\begin{proof}[Proof of Theorem~\ref{thm:main}]
Since every polynomial diffeomorphism $f$ of $\mathbb{C}^2$ with dynamical degree $d>1$ is affinely conjugate to a finite composition of generalized \Henon mappings, there exist generalized \Henon maps $H_1,\ldots,H_s$ and an affine automorphism $\Phi$ such that
\[
f = \Phi^{-1} \circ H_1 \circ \cdots \circ H_s \circ \Phi.
\]
We first consider the case
\[
F = H_1 \circ \cdots \circ H_s,
\]
and then incorporate the conjugacy.

\subsection*{Computability lemmas.}

The computability statements used in Lemmas~\ref{lem:Wsloc-saddle-computable}--\ref{lem:Wsloc-attr-computable} depend only on effective control of iterates, derivative bounds on compact sets, and hyperbolic structure of attracting and saddle periodic points. These properties are preserved under finite compositions of generalized \Henon maps, and hence the conclusions of these lemmas extend to $F$ with modified constants depending only on the number of compositions.

In addition, the dynamical dichotomy of Lemma~\ref{lem:dichotomy} continues to hold for $F$. Consequently, the classification of phase space into escaping, attracting, and saddle regimes used in Algorithm~\ref{alg:main} remains valid.

\subsection*{Computability of iterates.}

Since each $H_i$ is a polynomial automorphism, both $H_i$ and $H_i^{-1}$ are computable. It follows that $F$ and $F^{-1}$ are computable, as are all iterates $F^n$ and $F^{-n}$. Thus the forward and backward orbit computations used throughout Algorithm~\ref{alg:main} remain effective in this setting.

\subsection*{Lipschitz control and forward images.}

The control of diameters of box images in of Algorithm~\ref{alg:main} relies on computable Lipschitz bounds for iterates of the map. For the composition $F$, the derivative satisfies
\[
DF = D H_1 \cdot D H_2 \cdots D H_s,
\]
and hence
\[
|||DF(z)||| \leq \prod_{i=1}^s |||D H_i(z)|||.
\]
Thus Lipschitz constants for $F$ on compact sets can be computed effectively from those of the constituent \Henon maps (see Lemma~2.16 and Corollary~2.17 of \cite{BoydWolf-Skew1}). Since $s$ is fixed, this only modifies the constants appearing in the algorithm, and the diameter estimates used there continue to hold.

\subsection*{Escape and basin detection.}

The characterization of escaping points and attracting basins depends only on forward and backward iteration and is therefore unchanged for $F$. In particular, the escape test and the basin detection procedure apply directly, using the computability of $F$ established above.

\subsection*{Saddle dynamics and stable manifolds.}

All non-attracting periodic points lie in the Julia set and infinitely many of them are saddle points. For any saddle periodic point $\alpha$, the local stable manifold $W^s_{\mathrm{loc}}(\alpha)$ is computable (Lemma~\ref{lem:Wsloc-saddle-computable}). Moreover, the sets
\[
F^{-m}(W^s_{\mathrm{loc}}(\alpha))
\]
can be approximated using the same box-image procedure applied to $F^{-1}$.

By \cite{BS2}, we have $\overline{W^s(\alpha)} = J^+$, and hence, backward iterates of local stable manifolds approximate $J^+$. The argument used in our main algorithm, based on subdivision and Lipschitz control, carries over directly, since it depends only on computable bounds for $F$ and not on the specific form of a single \Henon map.

\subsection*{Completion of the construction.}

Combining the preceding observations, the algorithm of Section~\ref{sec:algorithm} applies to the map $F$ with only modified constants, depending on the fixed number $s$ of factors. Thus we obtain arbitrarily accurate approximations of $J^+$ and $J^-$, and hence of $J = J^+ \cap J^-$.

\subsection*{Conjugacy reduction.}

Finally, let
\begin{equation}\label{eqFM}
f = \Phi^{-1} \circ F \circ \Phi,
\end{equation}
where $\Phi$ and $\Phi^{-1}$ are affine automorphisms of $\mathbb{C}^2$. Since $f$ and $F$ are conjugate via an affine map, we conclude that the dynamical results needed in the algorithm of Section 3.2  hold for $f$. The only missing part is  that based on having oracle access to the the coefficients of $f$, one can compute the radius of a box $V=V_f$  satisfying $K_f\subset {\rm int}\, V_f$. From \eqref{eqFM} we conclude that $f$ can be written in the form
\begin{equation}\label{eqFM2}
f(z,w) = (A(a z + b w)^d + P(z,w), B(a z + b w)^d +Q(z,w)),
\end{equation}
where $d={\rm deg}\, F$, $A,B\in \bC$, $P,Q$ are complex polynomials of degree strictly smaller than $d$, and $a,b$ are the coefficients of the first coordinate of the linear part of $\Phi$. We note that at most one of the products $Aa$ and $Bb$ can be zero. Therefore, by evaluating the coefficients of $f$ at increasing precision we can  guarantee that either $Aa$ or $Bb$ are non-zero.
Without loss of generality we consider the case $Aa\not=0$. The case $Bb\not=0$ can be treated entirely analogous with $w$ taking the role of $z$ in the second coordinate of $f$. 
Next we approximate $Aa^d$ and $Ab^d$ at sufficiently high precision and denote these approximations by $a'$ and $b'$. At each approximation step we define $\Psi(z,w)=\Psi_{c',d'}(z,w)=(a'z+b'w,c'z+d'w)$, where $c',d'\in \bC$. 
By increasing the accuracy of the approximations $a'$ and $b'$ and at each step preforming a parameter search optimization algorithm for $c'$ and $d'$ we can assure that $g=\Psi\circ f\circ \Psi^{-1}$ is of the form
\begin{equation}\label{eqFM3}
g(z,w) = (Cz^d + R(z,w), S(z,w)),
\end{equation}
where $R$ and $S$ are polynomials with degree at most $d$ and all coefficients of degree $d$ terms of $R$ and $S$ are as small as needed  compared to $C$.
It follows that $g$ satisfies the criteria for the filtration lemma radius computation (see Lemma 2.1 in \cite{BS1}) and we can compute a radius of a box $V_g$ such that $K_g\subset {\rm int}\, V_g$. Since $g$ and $f$ are conjugate via $\Psi^{-1}$ it follows that $K_f=\Psi^{-1}(K_g)$. Thus, we may compute the radius of a box $V_f$ containing $K_f$ in its interior from the coefficients of $\Psi^{-1}$. We conclude that $J_f$ is computable which completes the proof of Theorem~\ref{thm:main}.
\end{proof}

Observe also that Corollary~\ref{cor:KplusKminus-computable} also translates immediately to this setting, establishing the computability of $K$ for polynomial diffeomorphisms of $\Ct$ with dynamical degree greater than one. 

\section{Examples of non-computability}
\label{sec:non-computability}

In our previous work (\cite{BoydWolf-Skew1,BoydWolf-1Dim,BoydWolf-Henon}), we established computability of Julia sets for several classes of multi-dimensional dynamical systems under hyperbolicity assumptions, including polynomial skew products and complex \Henon mappings, which we extended to certain non-hyperbolic cases above. It is a natural next step to explore non-computability examples in higher-dimensional settings. 
Thus, we now establish some examples of non-computability of maps of more than one complex dimension.

These examples contrast with the computability results of Section~\ref{sec:results}, and show that the presence of neutral (in particular, semi-Siegel) dynamics can be an obstruction to computability.

\subsection{Non-computability for polynomial skew products}

In the case of polynomial skew products, the structure of the Julia set is sufficiently well understood to allow a direct reduction to the one-dimensional situation. In particular, the presence of a non-computable base dynamics immediately forces non-computability of the full Julia set.
(Recall that Julia sets of polynomials in the complex plane are computable if and only if there are no Siegel disks \cite{BravermanYampolsky2006, BY2009}.)

\begin{proposition} \label{prop:SkewNonComputableCase}
Let $F(z,w)=(p(z),q(z,w))$ be a polynomial skew product (for which $p$ and $q$ are polynomials of the same degree $d\geq 2$).
If $J(p)$ is non-computable, then $J(F)$ is non-computable.
\end{proposition}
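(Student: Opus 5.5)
The plan is to argue by contraposition: assuming $J(F)$ is computable, I will show that $J(p)$ is computable. The tool is the projection $\pi(z,w)=z$ onto the base coordinate. Two facts drive the argument: $\pi$ maps $J(F)$ onto $J(p)$, and $\pi$ does not increase Hausdorff distances. Together these turn a Turing machine for $J(F)$ into one for $J(p)$.

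\textbf{Step 1 (structure of $J(F)$).} Here $J(F)$ is the Julia set of the skew product in the sense of our earlier work \cite{BoydWolf-Skew1}, following Jonsson. That is,
\[
J(F)=\overline{\bigcup_{z\in J(p)} \{z\}\times J_z},
\]
where $J_z=\partial K_z$ are the fiber Julia sets of the fiber maps $q_z=q(z,\cdot)$ along the base orbit. Since $p$ and $q$ have the same degree $d\geq 2$, every fiber filled Julia set $K_z$ is a nonempty compact set. Indeed, fiberwise escape radii are uniform over compact sets of $z$, and the usual potential-theoretic argument applies fiberwise (the fiber Green function is subharmonic with logarithmic growth). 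Hence $J_z=\partial K_z\neq\emptyset$ for every $z\in J(p)$.

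It follows that $\pi\bigl(\bigcup_{z\in J(p)}\{z\}\times J_z\bigr)=J(p)$. Taking closures, and using that $J(F)$ is compact, $\pi$ is continuous, and $J(p)$ is closed, I get $\pi(J(F))=J(p)$.

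\textbf{Step 2 (projection is non-expanding).} In the $L^\infty$ norm used throughout, $|\pi(x)-\pi(y)|\le\|x-y\|$. Therefore $d_H(\pi(A),\pi(B))\le d_H(A,B)$ for all compact sets $A,B\subset\mathbb{C}^2$. Moreover, the image of a dyadic box in $\mathbb{C}^2$ under $\pi$ is again a dyadic box in $\mathbb{C}$, with the same center coordinates and the same radius.

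\textbf{Step 3 (the algorithm).} Given a Turing machine that on input $n$ outputs $\Psi_n$ with $d_H(\Psi_n,J(F))\le 2^{-n}$, I output the finite union of boxes $\pi(\Psi_n)$. This is computed exactly, box by box. By Steps 1 and 2,
\[
d_H(\pi(\Psi_n),J(p))=d_H(\pi(\Psi_n),\pi(J(F)))\le 2^{-n}.
\]
So $J(p)$ would be computable, contradicting the hypothesis.

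\textbf{Main obstacle.} The only nontrivial step is Step 1, namely the identity $\pi(J(F))=J(p)$. This requires that $J_z\neq\emptyset$ for every base point $z\in J(p)$, and that taking the closure does not add points projecting outside $J(p)$. If $J(F)$ were instead taken to be $J^+\cap J^-$ in a different sense, I would first reconcile it with Jonsson's description $J_2$. I would cite that description rather than reprove it, and the same-degree hypothesis is exactly what is needed there.
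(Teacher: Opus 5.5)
Your proposal is correct and follows the same route as the paper. Both arguments use Jonsson's description $J(F)=\overline{\bigcup_{z\in J(p)}\{z\}\times J_z}$ and project an approximation of $J(F)$ onto the base coordinate to obtain an approximation of $J(p)$. You also make explicit what the paper leaves implicit: the fiber Julia sets are nonempty, which gives $\pi(J(F))=J(p)$ rather than mere density, and the $1$-Lipschitz projection sends dyadic boxes to dyadic boxes without increasing Hausdorff distance.
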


The result follows directly from Jonsson's characterization of the Julia set $J(F)$ for polynomial skew products: $J(F)$ is the closure of the repelling periodic points, and also the closure of the union of fiberwise Julia sets ($J_z$, defined in terms of a fiberwise Green's function which itself is derived from a global Green's function, see \cite{Jonsson1999}; $J(F)$ is also the support of the measure of maximal entropy). Since the set of base coordinates occurring in $J(F)$ is dense in $J_p$, any algorithm computing $J(F)$ would yield an algorithm computing $J_p$, a contradiction.  

\subsection{Non-computability for complex \Henon mappings}

For \Henon maps, we find non-computability of Julia sets also arises from Siegel phenomena; specifically, semi-Siegel dynamics. Unlike for skew products, where non-computability is inherited from the base dynamics, the phenomenon here is intrinsically two-dimensional.

Let $\theta$ be an irrational number in $[0,1)$. We recall that $\theta$ is a \textit{Bryuno} (or Brjuno) number (or simply is Bryuno) if
\[
\sum_{n=0}^\infty \frac{\log q_{n+1}}{q_n}<\infty,
\]
where $q_n$ are the denominators of the convergents in the continued fraction expansion of $\theta$. It is well-known that the set of Bryuno numbers has full Lebesgue measure.

Let $H:\mathbb{C}^2 \to \mathbb{C}^2$ be a dissipative \Henon map with a semi-Siegel fixed point, i.e., a fixed point with multipliers $(\lambda,\mu)$ satisfying $|\lambda|=1$ with $\lambda=e^{2\pi i \theta}$, where $\theta$ is a Bryuno number, and $|\mu|<1$. In this situation, the dynamics locally splits into a neutral direction and a contracting direction. Geometrically, this produces an invariant ``tube'' (or cylinder-like region) consisting of stable sets which are exponentially attracted toward a one-dimensional Siegel disk in the neutral direction. In particular, every point in this tube converges toward the Siegel disk under forward iteration, and the fixed point itself lies in the interior of the forward filled set $K^+$.

Semi-Siegel H\'enon dynamics has been studied in detail in \cite{BS2,FS,YampolskyYang2021}.

This configuration is unstable under perturbations of the rotation number. If $\theta$ is perturbed to a nearby rational value, the semi-Siegel structure is disappears, and the corresponding periodic point moves into the Julia set $J$. Thus, in passing from irrational to rational rotation numbers, points which were previously in the interior of $K^+$ become boundary points.

\begin{proposition}\label{Prop52}
Let $(H_\theta)$ be a family of dissipative \Henon maps depending continuously on a parameter $\theta$, and assume that $H_{\theta_0}$ has a semi-neutral fixed point $\alpha$ with neutral multiplier  $\lambda=e^{2\pi i {\theta_0}}$, where $\theta_0$ is  Bryuno. Then the map $H_{\theta_0}$ has a semi-Siegel fixed point, while for rational $\theta$ the corresponding fixed point lies in the Julia set $J(H_\theta)$. Then the Julia set $J(H_{\theta_0})$ is not computable.
\end{proposition}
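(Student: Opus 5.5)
The plan is to argue by discontinuity, following the Braverman--Yampolsky paradigm: a computable map $F\colon D\to\sC_2$ must be continuous, so it suffices to show that $\theta\mapsto J(H_\theta)$ is discontinuous at $\theta_0$ in the Hausdorff metric. Computing $J(H_{\theta_0})$ "from the map" means computing it via an oracle Turing machine fed an oracle for the coefficients of $H_{\theta_0}$. The same machine then yields arbitrarily good approximations for maps $H_\theta$ whose coefficients agree with those of $H_{\theta_0}$ to the precision the machine actually queries. Since $H_\theta$ depends continuously on $\theta$, rational $\theta$ arbitrarily close to $\theta_0$ produce such maps.

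The key steps, in order, are as follows.

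\textbf{Step 1.} Use the Bryuno condition together with the semi-Siegel theory cited above (\cite{BS2,FS,YampolskyYang2021}) to obtain a neighborhood of $\alpha=\alpha(\theta_0)$ contained in ${\rm int}\,K^+(H_{\theta_0})$. The Siegel cylinder is a Fatou component containing $\alpha$, and $\alpha\in K^-$ since it is fixed. Then observe that $\alpha\notin J^+(H_{\theta_0})$, so $\alpha\notin J(H_{\theta_0})$, and by compactness
\[
\delta:=d\bigl(\alpha(\theta_0),J(H_{\theta_0})\bigr)>0 .
\]

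\textbf{Step 2.} For rational $\theta_j\to\theta_0$, the hypothesis gives $\alpha(\theta_j)\in J(H_{\theta_j})$. The fixed point $\alpha(\theta)$ depends continuously on $\theta$ near $\theta_0$, by the implicit function theorem, since the multipliers $\lambda,\mu$ are both $\neq1$. Hence $d(\alpha(\theta_j),\alpha(\theta_0))<\delta/3$ for large $j$.

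\textbf{Step 3.} Conclude that $d_H\bigl(J(H_{\theta_j}),J(H_{\theta_0})\bigr)\ge 2\delta/3$ for all large $j$. Then run the standard oracle argument. Suppose a machine $T$ outputs a $2^{-n}$-approximation of $J(H_{\theta_0})$ with $2^{-n}<\delta/10$. It reads only finitely many oracle queries, so feeding it an oracle for $H_{\theta_j}$ that agrees on those queries (possible for $j$ large, by continuity of the family and the fact that oracles only need $2^{-m}$-accuracy) yields the same output $\Psi_n$. That output would then be $\delta/10$-close to both Julia sets, a contradiction.

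I expect the main obstacle to be the interpretation and rigor of the statement's setting. One must read "not computable" as in Braverman--Yampolsky: there is no oracle machine computing $J$ uniformly in a neighborhood of the parameter (or, more strongly, $J(H_{\theta_0})$ is noncomputable even when the coefficients are computable). The discontinuity argument above establishes the first form. The stronger form, an actually noncomputable set with computable $\theta_0$, would require a delicate construction of $\theta_0$ diagonalizing against all machines, as in \cite{BY2009}. I would try to import that construction by controlling the size of the semi-Siegel cylinder (via the conformal radius of the Siegel disk of the neutral slice) as a function of $\theta$. For the proposal as stated, I will prove the discontinuity/oracle version, noting that the additional hypothesis that rational $\theta$ put the fixed point in $J$ is exactly what Step 2 consumes.
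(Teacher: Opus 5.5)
Your proposal is correct and follows the paper's own argument. The fixed point $\alpha$ lies in $\mathrm{int}\,K^+(H_{\theta_0})$, while its continuations $\alpha(\theta_j)$ for rational $\theta_j\to\theta_0$ lie in $J(H_{\theta_j})$; hence $\theta\mapsto J(H_\theta)$ is Hausdorff-discontinuous at $\theta_0$ and cannot be computed there. You make explicit what the paper leaves to a citation, namely the gap $\delta$, the implicit-function continuation of $\alpha$, and the finite-query oracle argument. Your caveat is also accurate: the argument yields only the uniform (oracle-in-a-family) form of non-computability, and that is likewise all the paper's proof establishes.
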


\begin{proof}
Since $\theta_0$ is Bryuno, $\alpha$ is a semi-Siegel fixed point of   $H_{\theta_0}$. Hence $\alpha$ lies in the interior of $K^+(H_{\theta_0})$.
Let $\theta_n \to \theta_0$ be a sequence of rational parameters. For each $\theta_n$, the corresponding periodic point $\alpha_n$ lies in the Julia set $J(H_{\theta_n})$, and hence on the boundary of $K^+(H_{\theta_n})$. By continuity of the family, $\alpha_n \to \alpha$ in $\mathbb{C}^2$.
It follows that the sets $J(H_{\theta_n})$ contain points arbitrarily close to $\alpha$, while $\alpha$ lies in the interior of $K^+(H_{\theta_0})$. Therefore, the Julia sets $J(H_\theta)$ do not vary continuously in the Hausdorff topology at $\theta_0$.

On the other hand, if $J(H_\theta)$ were computable at $\theta_0$, then by general results in computable analysis (see, e.g., \cite{BY2009}) the map $\theta \mapsto J(H_\theta)$ would be continuous  at $\theta_0$. This contradiction shows that $J(H_{\theta_0})$ is not computable.
\end{proof}

Next we show the existence of Siegel disk \Henon maps with computable parameters, thus extending the corresponding one-dimensional result of Braverman and Yampolsky \cite{BravermanYampolsky2006}. They consider the quadratic family
\[
p_\theta(z)=z^2+ e^{2\pi i\theta} z, \quad \theta \in [0,1),
\]
which has a neutral fixed point at $0$. If  $\theta$ is Bryuno, then $p_\theta$ has a Siegel disk centered at $0$. In particular, $p_\theta$ has a Siegel disk at the origin for $\theta$-values in a set of full Lebesgue measure. It is natural to conjecture that the non-computability of Siegel Julia sets $J_\theta$  stems from the non-computability of $\theta$ -- surprisingly, this is not the case. 

Braverman and Yampolsky  construct an explicit algorithm that computes $\theta$-values with a Siegel disk at the origin. In particular, there exist computable $\theta$ values with a noncomputable Julia sets $J_\theta$. 

We generalize the result of Braverman and Yampolsky to \Henon maps. Consider the family $(H_{a,\theta})$ of quadratic \Henon maps defined by
\begin{equation}
    H_{a,\theta}(z,w)=(p_\theta(z)-aw,z),
\end{equation}
where $\theta\in[0,1)$ and $a\in\bC\setminus\{0\}$ with $|a|<1$.
Clearly, $H_{a,\theta}$ has a fixed point at the origin, and the eigenvalues of $DH_{a,\theta}(0,0)$ are given by
\begin{equation}
\lambda_{1/2}=\frac{e^{2\pi i\theta}\pm \sqrt{e^{4\pi i\theta}-4a}}{2}.
\end{equation}
It follows from results in \cite{BS2} (also see \cite{FS}) that if $\lambda_1$ is Bryuno, then $H_{a,\theta}$ has a Siegel disk $D$ centered at the origin and the stable set $W^s(D)$ of $D$ is an invariant connected component of ${\rm int}\, K^+$. Here $D$ is a $H_{a,\theta}$-invariant disk in $\bC^2$ whose dynamics is conjugate to an irrational rotation with rotation number $\theta$ on the (one-dimensional) unit disk. We have the following:
\begin{proposition}
    Let $\xi\in (0,1)$ be a computable Bryuno number and let $\theta\in [0,1)$ be computable. Then if $\theta$ is close enough to $\xi$ there exists a computable $a=a(\xi,\theta)$ such that $H_{a,\theta}$ has Siegel disk $D$ at the origin with rotation number $\xi$. Moreover,  if $\theta$ is close enough to $\xi$ then $|a|<1$ and $W^s(D)$ is a Siegel cylinder of $H_{a,\theta}$.
\end{proposition}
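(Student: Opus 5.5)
The plan is to solve for $a$ explicitly from the requirement that one eigenvalue of $DH_{a,\theta}(0,0)$ equals $\lambda:=e^{2\pi i\xi}$. The characteristic polynomial of $DH_{a,\theta}(0,0)=\begin{pmatrix} e^{2\pi i\theta} & -a\\ 1 & 0\end{pmatrix}$ is $t^2-e^{2\pi i\theta}t+a$. Thus $\lambda$ is an eigenvalue if and only if $\lambda^2-e^{2\pi i\theta}\lambda+a=0$, that is,
\[
a=a(\xi,\theta)=e^{2\pi i\theta}\lambda-\lambda^2=e^{2\pi i(\theta+\xi)}-e^{4\pi i\xi}=e^{2\pi i(\xi+\theta)}\bigl(1-e^{2\pi i(\xi-\theta)}\bigr).
\]
Since $\xi$ and $\theta$ are computable and $x\mapsto e^{2\pi i x}$ is computable, $a$ is computable; from an oracle for $(\xi,\theta)$ we produce an oracle for $a$ by composing computable functions. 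Moreover $|a|=|1-e^{2\pi i(\xi-\theta)}|=2|\sin(\pi(\xi-\theta))|$. Hence $|a|<1$ as soon as $|\xi-\theta|<1/6$ (mod $1$), and $a\neq 0$ as soon as $\theta\neq\xi$. We therefore interpret ``close enough'' as $0<|\theta-\xi|<1/6$; the case $\theta=\xi$ gives $a=0$, which is excluded.

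Next, the other eigenvalue is $\mu=a/\lambda$ because the product of the eigenvalues is $\det=a$. So $|\mu|=|a|<1$ and $|\lambda|=1$, and the origin is a semi-neutral (dissipative) fixed point whose neutral multiplier $\lambda_1=\lambda=e^{2\pi i\xi}$ has a Bryuno rotation number. We must also check that $\lambda$ is the root selected by the formula for $\lambda_{1/2}$; this is only a labeling issue, since the cited result concerns the neutral multiplier. We then invoke the result quoted just before the statement (from \cite{BS2}, see also \cite{FS}): if the neutral multiplier is $e^{2\pi i\xi}$ with $\xi$ Bryuno and $|\det|<1$, then $H_{a,\theta}$ has a Siegel disk $D$ through the origin on which the dynamics is conjugate to rotation by $\xi$. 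Furthermore, $W^s(D)$ is an invariant Fatou component, i.e.\ a Siegel cylinder. This yields the rotation number $\xi$ and the Siegel cylinder statement.

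The main obstacle is verifying the hypotheses of the linearization result exactly as stated there. One needs the holomorphic linearization of the semi-neutral fixed point in the dissipative case (a Pöschel/Bryuno-type argument on the center manifold, with no resonances $\lambda^k\mu^j=\lambda$ since $|\mu|<1$). One also needs that $W^s(D)$ is open and a full component of ${\rm int}\,K^+$, which is where dissipativity $|a|<1$ is used. I would cite \cite{BS2,FS} for these points rather than reprove them, and note only that non-resonance holds automatically because $|\lambda|=1$ and $0<|\mu|<1$, while the Bryuno condition handles the small-divisor problem in the neutral direction.
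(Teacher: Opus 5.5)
Your proposal is correct and follows the paper's route. Your $a=e^{2\pi i(\theta+\xi)}-e^{4\pi i\xi}$ is exactly the paper's $\frac14\bigl[e^{4\pi i\theta}-(2e^{2\pi i\xi}-e^{2\pi i\theta})^2\bigr]$ once expanded. Computability comes from composing computable functions, and the Siegel disk and Siegel cylinder conclusions are taken from \cite{BS2,FS}, as in the paper.

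Deriving $a$ from the characteristic polynomial avoids the square-root branch choice. Your bound $|a|=2|\sin(\pi(\xi-\theta))|$ makes ``close enough'' precise as $0<|\theta-\xi|<1/6$. It also correctly flags a point the paper's proof omits: $\theta=\xi$ must be excluded, since it forces $a=0$.
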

\begin{proof}
A simple calculation shows that by choosing $a=\frac14\left[ e^{4\pi i \theta}- (2e^{2\pi i\xi} -e^{2\pi i \theta})^2\right]$ the eigenvalue $\lambda_1$ of $D H_{\theta,a}(0,0)$ is $e^{2\pi i \xi}$. Moreover, $a$ is computable since $\theta$ and $\xi$ are computable. Finally, if  $\theta$ is close enough to $\xi$ then $|a|<1$ follows from the definition of $a$.
\end{proof}
It is important to note that the non-computability result  for Siegel Julia sets in Proposition \ref{Prop52} stems from the fact that the Julia set moves discontinuously in the Hausdorff metric at a Siegel parameter. One might ask if a Siegel Julia set remains to be non-computable if we allow the Turing machine to have access to the information that $D H_{\theta,a}$ has a Siegel cylinder at the origin. The corresponding one-dimensional question has been positively answered by Braverman and Yampolsky who showed that there are computable parameters $\theta$ so that $p_\theta$ has a non-computable Siegel disk centered at the origin. With some additional work, it should be possible to extend this result to the two-dimensional \Henon family $H_{a,\theta}$.
Since a detailed analysis of this statement would exceed the scope of this paper, we leave it open for future work.

\section{Further directions}
\label{sec:future}

The results of this paper suggest several natural directions for further study.

First, it would be of interest to extend these methods to polynomial automorphisms in $\mathbb{C}^n$ for $n>2$. In earlier work \cite{BoydWolf-Henon}, we extended hyperbolicity-based computability results to this higher-dimensional setting, and it would be natural to investigate whether similar extensions hold in the absence of neutral dynamics. 

The role of neutral dynamics in higher-dimensional systems remains only partially understood. In contrast to the one-dimensional setting, neutral dynamics in several complex variables can exhibit substantially more complicated behavior, and it would be of interest to characterize more precisely which types of neutral phenomena lead to computability or non-computability of Julia sets. 
In particular, one may ask to what extent parabolic behavior admits a useful combinatorial description: in one dimension, finite combinatorial data governs parabolic dynamics and yields computable Julia sets, whereas in higher dimensions both the definition and structure of parabolic phenomena remain much less understood. Identifying natural classes of maps with meaningful combinatorial invariants, and determining whether such data suffices to guarantee computability, is an interesting direction for future work (see, e.g., \cite{RaduTanase2018,RaduTanase2019}).

Another direction is to study quantitative aspects of these results, such as the computational complexity of approximating Julia sets beyond mere computability. In earlier work \cite{BoydWolf-1Dim} (see also \cite{DY2018}), polynomial-time complexity results were obtained in certain one-dimensional settings; however, to our knowledge, no such results are known in several complex variables, either in the presence or absence of hyperbolicity. It would be of interest to determine whether comparable complexity bounds can be obtained in this higher-dimensional setting.

It would also be of interest to investigate the  computability of Julia sets in parameter families of polynomial diffeomorphisms. In \cite{BoydWolf-Henon}, we showed that the hyperbolicity locus for \Henon maps is lower semi-computable; it would be natural to investigate how such effective properties of parameter space interact with the presence of neutral dynamics.

\bibliographystyle{plain}
\bibliography{BW-References4}

\end{document}